\title{On the centeredness of saturated ideals}
\author{Kenta Tsukuura}
\address{University of Tsukuba, Tsukuba, 305-8571, Japan}
\email{tukuura@math.tsukuba.ac.jp} 
\thanks{This research was supported by Grant-in-Aid for JSPS Research Fellow Number 20J21103.}
\keywords{huge cardinal, universal collapse, saturated ideal, centered ideal}
\subjclass[2020]{03E35, 03E55}
\newcommand{\force}{\Vdash}
\theoremstyle{plain}
\newtheorem{thm}{Theorem}[section]
\newtheorem{lem}[thm]{Lemma}
\newtheorem{clam}[thm]{Claim}
\newtheorem{ques}[thm]{Question}
\newtheorem{rema}[thm]{Remark}
\begin{document}
\maketitle
\begin{abstract}
 We show that Kunen's saturated ideal over $\aleph_1$ is not centered. We also evaluate the extent of saturation of Laver's saturated ideal in terms of $(\kappa,\lambda,<\nu)$-saturation.
\end{abstract}

\section{Introduction}
In \cite{MR495118}, Kunen established  
\begin{thm}[Kunen~\cite{MR495118}]\label{kunentheorem}
 Suppose that $j:V \to M$ is a huge embedding with critical point $\kappa$. Then there is a poset $P$ such that $P\ast\dot{S}(\kappa,j(\kappa))$ forces that $\aleph_1$ carries a saturated ideal.
\end{thm}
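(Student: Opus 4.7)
The plan is to define the saturated ideal as the pullback of a generic ultrafilter arising from a lift of $j$ to the full generic extension, so the task reduces to designing $P$ and $\dot S(\kappa, j(\kappa))$ so that $j$ lifts through both stages of forcing. For $P$, I would take a universal (Silver-style) collapse below $\kappa$: an Easton-support iteration $\langle P_\alpha, \dot Q_\alpha : \alpha < \kappa\rangle$ with each $\dot Q_\alpha$ a Silver-type collapse active at inaccessibles, arranged so that $P$ is $\kappa$-c.c., $\kappa$ becomes $\aleph_1^{V^P}$, and $j(P)$ factors in $M$ as $P \ast \dot R$ for a tail iteration $\dot R$ on the interval $[\kappa, j(\kappa))$. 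The second poset $\dot S(\kappa, j(\kappa))$ is a $\kappa$-directed-closed Silver-style collapse making $j(\kappa) = \aleph_2$ in the final extension.

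Let $G \ast H$ be $P \ast \dot S(\kappa, j(\kappa))$-generic over $V$. The lift proceeds in two stages. First, using ${}^{j(\kappa)}M \subseteq M$ from hugeness together with the chain condition and directed-closure of $\dot R$, I would construct a $V[G \ast H]$-generic $G' \subseteq \dot R^G$ with $j[G] \subseteq G \ast G'$, yielding $\tilde j : V[G] \to M[G \ast G']$. Second, the Silver form of $S$ is arranged so that $j[H] \in M[G \ast G']$ is a directed subset of $j(S)(j(\kappa), j(j(\kappa)))$ admitting a lower bound $m$; using $m$ as a master condition, I build $H' \ni m$ generic for $j(S(\kappa, j(\kappa)))$ over $M[G \ast G']$ with $j[H] \subseteq H'$, producing the full lift $\hat j : V[G \ast H] \to M[G \ast G' \ast H']$.

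Finally, in $V[G \ast H]$ define $I = \{X \subseteq \kappa : \kappa \notin \hat j(X)\}$. Standard arguments show that $I$ is a uniform normal ideal on $\kappa = \aleph_1^{V[G \ast H]}$; $\aleph_2$-saturation follows from the generic ultrapower argument: a maximal antichain in $\mathcal P(\kappa)/I$ of size $j(\kappa)$ would, via $\hat j$, contradict the fact that the generic filter $\{X : \kappa \in \hat j(X)\}$ selects exactly one of its members. The main obstacle is the master condition at stage two: arranging that $j[H]$ admits a lower bound in $j(S)(j(\kappa), j(j(\kappa)))$. This is where hugeness (giving ${}^{j(\kappa)}M \subseteq M$, which brings $j[H]$ into $M$'s purview) combines with the precise small-support, $\kappa$-directed-closed structure of the Silver collapse $S$ (which guarantees that such a lower bound exists); this interplay is the real technical content of Kunen's construction.
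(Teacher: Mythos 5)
Your skeleton is the standard Kunen blueprint, which is also what the paper relies on (via Lemma \ref{basicpropofunivcoll} and Theorem \ref{Kunenideal}): a universal collapse $P$ with $P \ast \dot S(\kappa,j(\kappa))$ realized as an initial segment $j(P)_{\kappa+1}$ of $j(P)$, a generic lift obtained by forcing with the quotient $j(P)/(G\ast H)$, and a master condition for the last stage. One point to make explicit: it is not enough that $j(P)$ factor as $P\ast\dot R$ for some tail $\dot R$; the first step of the tail must be exactly $S(\kappa,j(\kappa))$ as computed in $V^{P}$, which is what the ``goodness of $\kappa$'' built into the universal collapse buys (Lemma \ref{basicpropofunivcoll}(3),(4)). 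You correctly isolate the master condition as the technical heart, and your reasons for its existence are the right ones: ${}^{j(\kappa)}M\subseteq M$ puts $j[H]$ into $M$, and the uniform bound $\xi<\kappa$ on the domains of Silver conditions guarantees that $\bigcup j[H]$ is again a condition in $j(S(\kappa,j(\kappa)))$.

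The genuine gap is in the saturation argument. First, a definability point: $I$ cannot be defined in $V[G\ast H]$ as $\{X:\kappa\notin\hat j(X)\}$, since $\hat j$ exists only after forcing with the quotient; the induced ideal is $I=\{X:\Vdash\kappa\notin\hat j(\check X)\}$. More seriously, ``a maximal antichain of size $j(\kappa)$ would contradict the fact that the generic filter selects exactly one of its members'' proves nothing: a generic filter meets a maximal antichain of \emph{any} cardinality in exactly one point, so no contradiction arises. The actual argument is a chain-condition transfer: the map $X\mapsto\|\kappa\in\hat j(\check X)\|$ sends $I$-positive sets to nonzero conditions and $I$-disjoint sets to incompatible conditions, so an antichain of size $j(\kappa)$ in $\mathcal P(\aleph_1)/I$ yields one in the forcing realizing $\hat j$; this contradicts the $j(\kappa)$-c.c.\ of $j(P)$ (hence of $j(P)/(G\ast H)$), which holds because $j(\kappa)$ is Mahlo in $M$ and the iteration has small supports, and one must also check that the highly closed master-condition stage contributes nothing new to these Boolean values. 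Since $j(\kappa)=\aleph_2$ in $V[G\ast H]$, this is exactly $\aleph_2$-saturation. The $j(\kappa)$-c.c.\ of $j(P)$ --- noted in the paper right after Lemma \ref{basicpropofunivcoll} and packaged with the Boolean-value embedding into the isomorphism $\mathcal P(\mathcal P_\kappa\lambda)/\dot I\simeq j(P)/\dot G\ast\dot H$ of Lemma \ref{fms} --- is the ingredient your sketch omits entirely; without it the generic embedding gives precipitousness-type conclusions but not saturation.
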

This theorem has been improved in some ways. One is due to Foreman and Laver \cite{MR925267}. They established 
\begin{thm}[Foreman--Laver~\cite{MR925267}]
 Suppose that $j:V \to M$ is a huge embedding with critical point $\kappa$. Then there is a poset $P$ such that $P\ast\dot{R}(\kappa,j(\kappa))$ forces that $\aleph_1$ carries a centered ideal.
\end{thm}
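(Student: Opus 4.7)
The plan is to imitate the proof of Theorem~\ref{kunentheorem}, replacing Kunen's collapse $S(\kappa,j(\kappa))$ with a finer forcing $R(\kappa,j(\kappa))$ engineered so that the quotient Boolean algebra is $\aleph_1$-centered, rather than merely $\aleph_2$-cc. I would use (or lightly adapt) the same preparation $P$---a Laver-style universal collapse---to arrange that in $V^P$ the critical point $\kappa$ is still huge and that $j$ admits a first lift $\tilde j : V[G] \to M[G^*]$ for a generic $G^* \subseteq j(P)$ over $M$ extending $j[G]$.

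The crucial definition is that of $R(\kappa,j(\kappa))$ in $V[G]$: take it to be a product-style collapse of $[\kappa,j(\kappa))$ to $\kappa$ with \emph{finite} (or more generally bounded) support above $\kappa$, instead of the $<\kappa$-support product that underlies $S(\kappa,j(\kappa))$. The properties one has to verify are: (i) $R$ collapses $j(\kappa)$ to $\kappa^+$ while preserving $\kappa$, so that in the final extension $\kappa=\aleph_1$ and $j(\kappa)=\aleph_2$; (ii) $R$ is $\kappa$-centered in $V[G]$, witnessed by a coloring $c : R \to \kappa$ whose fibers are upward-directed; and (iii) $j(R)$ factors in $M[G^*]$ as $R \ast \dot Q$ for a quotient $\dot Q$ admitting a master condition over $V[G \ast H]$, where $H$ is $R$-generic. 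Using (iii) one extends $\tilde j$ to a further lift $\hat j : V[G \ast H] \to M[G^* \ast H \ast H']$.

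In $V[G \ast H]$ one then sets
$$I = \{\, X \subseteq \aleph_1 \mid 1 \force_{j(P\ast R)/(G\ast H)} \kappa \notin \hat j(\check X) \,\},$$
and the standard generic-embedding calculus shows $I$ is a normal, uniform, $\aleph_1$-complete ideal on $\aleph_1$. By the usual duality, $\mathcal{P}(\aleph_1)/I$ embeds as a complete subalgebra of the quotient forcing $j(P\ast R)/(G\ast H)$, so it suffices to show the latter is $\aleph_1$-centered in $V[G\ast H]$. This is inherited from (ii) and (iii): the coloring $c$ lifts under $j$ to a coloring of $j(R)/H$ with upward-directed fibers in $M[G^* \ast H]$, and composing with an analogous centering for $j(P)/G$ yields an $\aleph_1$-centering of the whole quotient.

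The main obstacle I expect is the simultaneous realization of (i) and (ii) together with (iii): collapsing every cardinal in $[\kappa,j(\kappa))$ while preserving $\kappa$ ordinarily pushes one toward $<\kappa$-support, yet $\kappa$-centeredness pushes toward finite support. Striking the right compromise---a mixed-support or Easton-style product together with a careful coordination with the preparation $P$---is exactly what guarantees that $j(R)/H$ remains centered in $M[G^* \ast H]$ rather than only in $V[G \ast H]$. Once $R$ is correctly specified, the master-condition construction for $\dot Q$ and the verification of normality, completeness, and centeredness of $I$ follow the familiar pattern of Theorem~\ref{kunentheorem}.
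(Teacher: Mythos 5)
Your plan founders on the definition of $R(\kappa,j(\kappa))$ itself, which is the entire content of the Foreman--Laver theorem. You propose a ``product-style collapse with finite (or bounded) support'' built from the usual factors ${}^{<\kappa}\gamma$. But no choice of support can make such a product $\kappa$-centered: already a single factor $\mathrm{Coll}(\kappa,\gamma)$ with $\gamma\geq\kappa^{+}$ contains the antichain $\{\{\langle 0,\beta\rangle\}\mid\beta<\gamma\}$ of size $\gamma>\kappa$, and a $\kappa$-centered poset has the $\kappa^{+}$-c.c. So centeredness must be obtained by restricting the \emph{values} $p(\gamma)$ coordinatewise (in Foreman--Laver the values outside a finite part of the domain are guided by data fixed in advance, so that a condition is determined up to compatibility by $\kappa$-much information), not by shrinking the support. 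Moreover the finite-support option destroys $\kappa$-closure, so $\mu^{+}=\kappa$ (indeed the preservation of $\kappa$ as $\aleph_1$) would be lost; your items (i) and (ii) are not in tension to be ``balanced'' --- with the factors you chose, (ii) is simply unattainable.

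The second gap is your decision to ``use (or lightly adapt) the same preparation $P$.'' This is refuted by the very paper you are working in: Lemma~\ref{rephrasedmain} shows that the quotient $j(P)/\dot{G}\ast\dot{H}$ fails to be centered \emph{because of the Silver collapses occurring at good stages $\alpha\in(\kappa,j(\kappa))$ inside $j(P)$}, not because of the top collapse. Since (by the duality of Lemma~\ref{fms}) $\mathcal{P}(\aleph_1)/I$ is isomorphic to that quotient, which still contains those iterands no matter what you place on top at stage $\kappa$, keeping Kunen's universal collapse guarantees a non-centered ideal. Foreman and Laver must, and do, rebuild the entire universal iteration out of their centered collapses $R(\cdot,\cdot)$, so that every iterand appearing in the quotient $j(P\ast\dot{R})/(G\ast H)$ is centered and the centerings can be amalgamated. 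Your closing paragraph correctly senses that ``$j(R)/H$ must remain centered in $M[G^{*}\ast H]$,'' but without the coordinatewise restriction of conditions and without redoing the preparation, neither half of that requirement can be met.
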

Centeredness is one of the strengthenings of saturation. See Section 2 for the definition of centered ideal. Foreman and Laver introduced the poset $R(\kappa,\lambda)$ to obtain the centeredness, while Kunen used the Silver collapse $S(\kappa,\lambda)$. In their paper, it is claimed without proof that the ideal in Theorem \ref{kunentheorem} is not centered. We aim to give a proof of this claim in greater generality. Indeed, we will show that
 
\begin{thm}\label{main}
 Suppose that $j:V \to M$ is a huge embedding with critical point $\kappa$ and $f:\kappa \to \mathrm{Reg}\cap \kappa$ satisfies $j(f)(\kappa) \geq \kappa$. For regular cardinals $\mu < \kappa \leq \lambda = j(f)(\kappa) < j(\kappa)$, there is a $P$ such that $P\ast\dot{S}(\lambda,j(\kappa))$ forces $\mu^{+} = \kappa$ and $\lambda^{+} = j(\kappa)$ and $\mathcal{P}_{\kappa}(\lambda)$ carries a saturated ideal that is not centered. 
\end{thm}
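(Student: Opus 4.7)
The plan is to follow Kunen's construction adapted to this setting. Take $P$ to be a standard Laver-style preparation of $\kappa$ (with $f$ serving as the anticipation function), composed with $\mathrm{Coll}(\mu,{<}\kappa)$, chosen so that $j$ lifts through $P$ to $\hat{j}:V[G_P]\to M[\hat{j}(G_P)]$. Let $H$ be $S(\lambda,j(\kappa))$-generic over $V[G_P]$ and set $G=G_P*H$. The master condition $\bigcup\hat{j}''H$ is a legal element of $\hat{j}(S)^{M[\hat{j}(G_P)]}$, since $|\hat{j}''H|\le\lambda$ in $M$ and the relevant factors of $\hat{j}(S)$ are sufficiently closed; forcing with $\hat{j}(S)$ below it yields a further lift $\tilde{j}:V[G]\to M[\tilde{j}(G)]$. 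In $V[G]$ we will have $\mu^+=\kappa$ and $\lambda^+=j(\kappa)$, and the ideal to analyze is
\[
I=\{A\subseteq\mathcal{P}_\kappa(\lambda):j''\lambda\notin\tilde{j}(A)\},
\]
a normal, fine, $\kappa$-complete ideal on $\mathcal{P}_\kappa(\lambda)$.

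Saturation should be routine: by Foreman's duality, the quotient $\mathcal{P}(\mathcal{P}_\kappa(\lambda))/I$ is, below the positive set associated to the master condition, forcing-equivalent to the tail of $\tilde{j}(P*\dot{S})/G$ computed in $M[\hat{j}(G_P)]$. A standard chain-condition calculation on Silver's collapse (using $\lambda^{<\lambda}=\lambda$ in $V[G_P]$) shows this tail is $j(\kappa)$-cc; since $j(\kappa)=\lambda^+$, this gives $\lambda^+$-saturation of $I$.

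The main new content is non-centeredness. The plan is to argue by contradiction: a $\lambda$-centering $c:I^+\to\lambda$ would, through the duality above, yield a $\lambda$-centering of the tail of $\tilde{j}(S)$ above the master condition. Against this, one exhibits an obstruction intrinsic to Silver-style collapses $S(\lambda,\theta)$ with $\theta=\lambda^+$: above any condition, and at a suitable coordinate $(\alpha,\beta)\in\lambda\times\theta$, there are antichains of size $|\beta|$ consisting of extensions fixing $p(\alpha,\beta)=\gamma$ for distinct $\gamma<\beta$. Choosing $\beta$ of large enough cofinality and exploiting the finite-meet clause of the centering property, one arranges that no $\lambda$-coloring can keep pairwise-compatible extensions within a single color class, producing the desired contradiction with $c$.

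The main obstacle is verifying that this Silver-side obstruction actually survives the passage to $\mathcal{P}(\mathcal{P}_\kappa(\lambda))/I$: a putative centering on the ideal side must be shown to translate faithfully into a centering of the tail forcing below the master condition, and one must rule out that $\lambda$-sized data already present in $V[G]$ could consolidate the independent branching of Silver's collapse into $\lambda$ compatible classes. This is precisely the structural feature that separates Silver's $S(\lambda,j(\kappa))$ from Foreman--Laver's $R(\kappa,\lambda)$, whose shared side conditions are engineered exactly to enable $\lambda$-centering.
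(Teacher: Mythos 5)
Your framework (universal collapse $P$, lifting/duality identifying $\mathcal{P}(\mathcal{P}_\kappa\lambda)/I$ with a quotient forcing, and reducing non-centeredness of $I$ to non-centeredness of that quotient) matches the paper's, which proves Theorem \ref{main} via Lemmas \ref{fms} and \ref{rephrasedmain}. The gap is in the only genuinely new step, the non-centeredness of the quotient, where your proposed obstruction does not work as stated. The single-coordinate antichains you describe --- extensions fixing $p(\alpha,\beta)=\gamma$ for distinct $\gamma<\beta$ --- have size $|\beta|$, but in the final model every cardinal in $(\lambda,j(\kappa))$ has been collapsed to $\lambda$, so these antichains have size at most $\lambda$ there and are perfectly compatible with $\lambda$-centeredness. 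You flag exactly this issue yourself (``rule out that $\lambda$-sized data already present in $V[G]$ could consolidate the independent branching''), but that is the entire content of the theorem, not a verification to be deferred; no choice of ``$\beta$ of large enough cofinality'' at a single coordinate repairs it. Note also that the paper locates the obstruction not in the top collapse $S(\lambda,j(\kappa))$ (or its $j$-image) but in the intermediate collapses $S^{Q}(\alpha,\cdot)$ occurring inside $j(P)$ at good $\alpha\in(\kappa,j(\kappa))$.

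The missing idea is a reflection into an intermediate model. Given a putative centering family $\langle \dot C_\xi \mid \xi<\lambda\rangle$ of $j(P)_{\alpha+1}/\dot G\ast\dot H$, the paper attaches to each condition $q$ an ordinal $\rho(q)<j(\kappa)$ bounding how much of $\dot S(\lambda,j(\kappa))$ is needed to decide the statements $q\in\dot C_\xi$, and then chooses a strong limit $\delta$ that is a closure point of $\rho$ with $\lambda<\mathrm{cf}(\delta)\le\alpha$. The two cofinality constraints do opposite jobs: $\mathrm{cf}(\delta)\le\alpha$ makes $S(\alpha,\delta)$ carry an antichain of size $\delta^{\mathrm{cf}\delta}\ge\delta^{+}$ (Lemma \ref{dispoint}(1) --- an antichain indexed by $\prod_i\delta_i$, not by a single coordinate), while $\mathrm{cf}(\delta)>\lambda$ gives $\delta^{\lambda}=\delta$, so $P\ast\dot S(\lambda,\delta)$ preserves $(\delta^{+})^{V}$ as a cardinal $\ge\lambda^{+}$ (Lemma \ref{dispoint}(2)). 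Because $\delta$ is a closure point of $\rho$, membership in each $\dot C_\xi$ for conditions in $Q\ast S^{Q}(\alpha,\delta)$ is already decided by $P\ast\dot S(\lambda,\delta)$, so the centering family restricts to a $\lambda$-centering of $Q\ast S^{Q}(\alpha,\delta)/\dot G\ast\dot H_\delta$ in $V[G\ast H_\delta]$ --- a model where that poset still has an antichain of size $\ge\lambda^{+}$. That is the contradiction. Without this descent to $V[G\ast H_\delta]$ (or some equivalent device), the branching you exhibit in the Silver collapse really can be consolidated into $\lambda$ classes in the final model, and your argument does not close.
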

We can regard an ideal over $\mathcal{P}_{\kappa}\kappa$ as an ideal over $\kappa$. If we put $f = \mathrm{id}$, $\lambda = \kappa$ and $\mu = \aleph_0$, then $P$ and the ideal in Theorem \ref{main} are the same as those in Theorem \ref{kunentheorem}. 

We also study Laver's saturated ideal. Laver introduced Laver collapse $L(\kappa,\lambda)$ to get a model in which $\aleph_1$ carries a strongly saturated ideal. He established 
\begin{thm}[Laver~\cite{MR673792}]\label{lavertheorem}
 Suppose that $j:V \to M$ is a huge embedding with critical point $\kappa$. Then there is a poset $P$ such that $P\ast\dot{L}(\kappa,j(\kappa))$ forces that $\aleph_1$ carries a saturated ideal.
\end{thm}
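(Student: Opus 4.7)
The plan is to mimic the template of Kunen's proof of Theorem \ref{kunentheorem}, replacing the Silver collapse at the top with Laver's collapse $L(\kappa, j(\kappa))$ and designing $P$ so that the image iteration $j(P)$ absorbs $P \ast \dot{L}(\kappa, j(\kappa))$ as an initial segment. Concretely, I take $P$ to be a universal Easton-support iteration of length $\kappa$ which, at each suitable inaccessible $\alpha < \kappa$, forces with a Laver collapse $L(\delta_{\alpha}, \alpha)$ for an appropriately chosen $\delta_{\alpha}$. The design is constrained by two requirements: (i) $P$ has the $\kappa$-cc so that $\kappa$ is preserved, and (ii) in $M$ one has a factorization $j(P) \cong P \ast \dot{L}(\kappa, j(\kappa)) \ast \dot{Q}$ for some tail $\dot{Q}$.

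Next I would lift $j$ in the standard two-step fashion. Let $G \ast g$ be $V$-generic for $P \ast L(\kappa, j(\kappa))$. Inside $V[G \ast g]$, the tail $Q := \dot{Q}[G \ast g]$ is sufficiently closed from $M[G \ast g]$'s viewpoint, and because $M$ is $j(\kappa)$-closed in $V$ by hugeness, one can diagonalize over its at most $j(\kappa)$-many maximal antichains to construct an $M[G \ast g]$-generic $H$ for $Q$. A master-condition argument then lifts $j$ through $g$ by exploiting the anticipatory structure of the Laver collapse, yielding $\hat{j}: V[G \ast g] \to M[G \ast g \ast H]$ with $\mathrm{crit}(\hat{j}) = \kappa = \aleph_1^{V[G \ast g]}$.

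Finally, define the ideal $I$ on $\aleph_1$ by $X \in I \iff \kappa \notin \hat{j}(X)$, i.e., the dual of the $V[G \ast g]$-ultrafilter derived from $\hat{j}$. Saturation of $I$ reduces, via the usual correspondence between saturated ideals and chain conditions of quotient forcings, to showing that $j(P \ast \dot{L}(\kappa, j(\kappa)))/(P \ast \dot{L}(\kappa, j(\kappa)))$ has the $j(\kappa)$-cc in the intermediate model; this follows from the $j(\kappa)$-cc of $L(\kappa, j(\kappa))$ (a built-in feature of Laver's collapse) together with a standard two-step quotient analysis. The main obstacle is engineering the alignment $j(P) \cong P \ast \dot{L}(\kappa, j(\kappa)) \ast \dot{Q}$ and verifying that $L(\kappa, j(\kappa))$ behaves correctly under $j$: this demands the precise form of $L$ so that it simultaneously collapses $j(\kappa)$ onto $\kappa^{+}$, preserves $\kappa$, absorbs tails of the image iteration, and admits the master conditions needed for lifting — exactly the combinatorial content that Laver built into his collapse.
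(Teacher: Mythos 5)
Your overall architecture --- a universal Easton-support iteration $P$ of Laver collapses below $\kappa$, the absorption $P \ast \dot{L}(\kappa,j(\kappa)) \lessdot j(P)$, a master condition for lifting through the top collapse, and saturation via a chain condition of a quotient --- is exactly the standard Kunen--Laver template that Section 4 follows. But the middle of your argument contains a fatal step: you claim to build an $M[G\ast g]$-generic filter $H$ for the tail \emph{inside} $V[G\ast g]$, obtain a lifted embedding $\hat{j}\colon V[G\ast g]\to M[G\ast g\ast H]$ definable in $V[G\ast g]$, and then define $I$ as the dual of the derived ultrafilter. If this worked, $\{X \mid \kappa\in\hat{j}(X)\}$ would be a normal (in particular countably complete, nonprincipal) ultrafilter on $\aleph_1$ lying in $V[G\ast g]$, making $\aleph_1$ measurable --- impossible in ZFC. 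The dual of an ultrafilter is a prime ideal, whose quotient algebra is trivial; a saturated ideal on $\aleph_1$ is interesting precisely because it is \emph{not} prime. Concretely, the diagonalization cannot succeed: the quotient $j(P)/(\dot G\ast\dot H)$ is not closed at all (it collapses $j(\kappa)=\aleph_2^{V[G\ast g]}$ to $\aleph_1$, since $j(P)$ forces $\mu^{+}=j(\kappa)$ by elementarity), so there is no hope of meeting all of its maximal antichains internally.

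The correct route, and the one behind Theorem \ref{lavertheorem} and Lemma \ref{dualitylaver}, is to \emph{force} with the quotient: the lift $\hat{j}$ and the generic ultrafilter exist only in a further extension, and one defines, in $V[G\ast g]$, $X\in I$ iff the quotient forcing forces that the seed $[j``\lambda]$ (here just $\kappa$, since $\lambda=\kappa$) does not belong to $\hat{j}(X)$. Saturation then follows because $\mathcal{P}(\mathcal{P}_{\kappa}\lambda)/I$ is isomorphic to $j(P)/(\dot G\ast\dot H)$, which inherits the $j(\kappa)$-c.c.\ from $j(P)$. Note also that the relevant chain condition is that of $j(P)$ alone, not of the full two-step quotient $j(P\ast\dot{L})/(G\ast g)$ as you wrote: the top image collapse $L(j(\kappa),j(j(\kappa)))$ is $j(\kappa)$-closed and has antichains of size $j(\kappa)^{+}$, so the full quotient does not have the $j(\kappa)$-c.c.; the duality lemma is precisely what cuts the Boolean algebra of the ideal down to the $j(P)$ part.
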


We don't know whether Laver's saturated ideal is centered or not. But we study the extent of saturation of this ideal in terms of $(\kappa,\lambda,<\nu)$-saturation.

\begin{thm}\label{main2}
 Suppose that $j$ is a huge embedding with critical point $\kappa$ and $f:\kappa \to \mathrm{Reg}\cap \kappa$ satisfies $j(f)(\kappa) \geq \kappa$. For regular cardinals $\mu < \kappa \leq \lambda = j(f)(\kappa) < j(\kappa)$, there is a $P$ such that $P\ast\dot{L}(\lambda,j(\kappa))$ forces that $\mu^{+} = \kappa$, $\lambda^{+} = j(\kappa)$ and $\mathcal{P}_{\kappa}(\lambda)$ carries a strongly saturated ideal ${I}$, that is, ${I}$ is $(j(\kappa),j(\kappa),<\lambda)$-saturated. And ${I}$ is $(j(\kappa),\lambda,\lambda)$-saturated but not $(j(\kappa),j(\kappa),\lambda)$-saturated.
\end{thm}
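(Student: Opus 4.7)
The plan is to generalize Laver's proof of Theorem \ref{lavertheorem} in parallel with the Kunen-to-Foreman-Laver-style generalization that produced Theorem \ref{main}, and then to read off the precise degree of saturation from the combinatorial structure of the Laver collapse $L(\lambda, j(\kappa))$.

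\textbf{Setup and lifting.} I would let $P$ be a Laver-style universal collapse iteration of length $\kappa$, guided by $f$, arranged so that $j(P) \cong P \ast \dot L(\lambda, j(\kappa)) \ast \dot P_{\mathrm{tail}}$ with $\dot P_{\mathrm{tail}}$ sufficiently closed in $M[G \ast H]$. Given $V$-generic $G \ast H$ for $P \ast L(\lambda, j(\kappa))$, use ${}^{j(\kappa)}M \subseteq M$ to build a tail generic $K$ over $M[G \ast H]$ together with a master condition for the next Laver factor of $j(j(P))$, lifting $j$ to $\bar j : V[G \ast H] \to N$. Define the ideal $I$ on $\mathcal P_\kappa(\lambda)$ in $V[G \ast H]$ by $A \in I^{+} \iff j[\lambda] \in \bar j(A)$; a standard analysis shows that the quotient Boolean algebra $\mathcal P(\mathcal P_\kappa \lambda)/I$ densely embeds into the tail forcing producing $\bar j$ from $G \ast H$.

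\textbf{The three saturation statements.} For strong saturation $(j(\kappa), j(\kappa), <\lambda)$: the tail forcing is $<\lambda$-directed-closed (the Laver collapse $L(\lambda, j(\kappa))$ is, and the higher stages of $j(P)$ are even more closed), so any decreasing $<\lambda$-sequence of positive sets admits a common lower bound, giving the strong saturation directly. For intermediate $(j(\kappa), \lambda, \lambda)$-saturation: I would apply a $\Delta$-system plus pigeonhole argument on the $<\lambda$-sized supports of tail conditions representing an arbitrary $j(\kappa)$-family of positive sets, extracting a $\lambda$-subfamily whose conditions amalgamate coherently into a single tail condition, whose image is then $\lambda$-linked in $I^{+}$. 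For the failure of $(j(\kappa), j(\kappa), \lambda)$-saturation: the Laver generic $H$ supplies $j(\kappa)$ canonical positive sets, one for each value of the collapsing function at the critical level, such that any $\lambda$-many drawn from any $j(\kappa)$-subfamily would force a total support of full size $\lambda$ and thus cannot amalgamate within the $<\lambda$-support bound of $L(\lambda, j(\kappa))$.

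\textbf{Main obstacle.} The subtle point is the combinatorial balance between the positive part and the failure: one must identify the exact structural feature of $L(\lambda, j(\kappa))$ whereby a judiciously selected $\lambda$-subfamily of every $j(\kappa)$-family amalgamates, while simultaneously no $j(\kappa)$-subfamily of the canonical witnesses does. A further technical checkpoint is to verify that the witnesses for the failure descend to genuinely $I$-positive sets in $V[G \ast H]$, rather than being absorbed into $I$ by the master-condition lift; this requires tracking precisely which tail conditions reflect to positive representatives under the projection from $\mathcal P(\mathcal P_\kappa \lambda)/I$ to the tail quotient.
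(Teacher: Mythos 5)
Your overall architecture is the right one and matches the paper's: build a Laver-style universal collapse $P$ so that $P\ast\dot{L}(\lambda,j(\kappa))$ embeds completely into $j(P)$, identify $\mathcal{P}(\mathcal{P}_{\kappa}\lambda)/\dot{I}$ with the quotient $j(P)/\dot{G}\ast\dot{H}$, and read the three chain conditions off that quotient. But two of your three saturation arguments have genuine gaps. First, strong saturation does not follow from closure of the tail. The quotient $j(P)/\dot{G}\ast\dot{H}$ is not $<\lambda$-directed closed in $V[G\ast H]$ --- it collapses $\kappa=\mu^{+}$ to $\mu$, so it is not even $\mu^{+}$-closed --- and even if it were, the $(j(\kappa),j(\kappa),<\lambda)$-c.c.\ asks you to refine an \emph{arbitrary} $j(\kappa)$-family to a $j(\kappa)$-subfamily all of whose $<\lambda$-subsets are compatible; closure only produces lower bounds for families that are already directed. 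What is actually needed is the $\Delta$-system argument showing $j(P)$ itself has the $(j(\kappa),j(\kappa),<\nu)$-c.c.\ for all $\nu<j(\kappa)$, together with a lemma transferring such chain conditions to quotients by complete suborders (the paper isolates this as Lemma \ref{quotientsaturation}, whose ``reduct'' step is exactly the point your $\Delta$-system sketch for the intermediate case also glosses over: one must amalgamate the $P\ast\dot{L}$-parts of the conditions representing the tail conditions, not just the tail supports).

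Second, your witness for the failure of $(j(\kappa),j(\kappa),\lambda)$-saturation does not work as described: there is no ``$<\lambda$-support bound'' in $L(\lambda,j(\kappa))$ or in the Easton-support iteration $j(P)$ --- supports there may have any size $<j(\kappa)$, in particular size $\lambda$ --- so a family whose union of supports has size $\lambda$ can perfectly well have a common lower bound. The real obstruction is Easton-ness: supports must be bounded below every inaccessible. The paper's Lemma \ref{mainlemma2} therefore takes conditions $q_{\alpha}$ with $\mathrm{supp}(q_{\alpha})=\{\alpha\}$ for $\alpha>\kappa+1$ and, given an arbitrary $j(\kappa)$-sized $\dot{A}$, uses the Mahloness of $j(\kappa)$ to find an inaccessible $\delta>\lambda$ that is a limit point of $\dot{A}$; then $\{q_{\alpha}\mid\alpha\in\dot{A}\cap\delta\}$ has size $\lambda$ and no lower bound, since a lower bound would need support unbounded in $\delta$. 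Without this reflection to an inaccessible limit point you cannot rule out amalgamation of the $\lambda$-subfamily you select. Finally, the ``technical checkpoint'' you flag --- that the witnesses remain in the quotient --- is resolved cheaply in the paper: the image of every condition of $P\ast\dot{L}(\lambda,j(\kappa))$ lies in $j(P)_{\kappa+1}$, hence is compatible with each $q_{\alpha}$, so all $q_{\alpha}$ are forced into $j(P)/\dot{G}\ast\dot{H}$; you should include this observation rather than leave it open.
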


 The structure of this paper is as follows: In Section 2, we recall the basic facts of forcing. We also recall Silver collapses. Section 3 is devoted to the proof of Theorem \ref{main}. In Section 4, we recall Laver collapse and we give the proof of Theorem \ref{main2}.

\section{Preliminaries}
In this section, we recall some definitions. We use \cite{MR1994835} as a reference for set theory in general. 

Our notation is standard. We use $\kappa,\lambda,\mu$ to denote a regular cardinal unless otherwise stated. We also use $\nu$ to denote a cardinal, possibly finite, unless otherwise stated. We write $[\kappa,\lambda)$ for the set of all ordinals between $\kappa$ and $\lambda$. By $\mathrm{Reg}$, we mean the set of all regular cardinals. For $\kappa < \lambda$, $E_{\geq \kappa}^{\lambda}$ and $E_{<\kappa}^{\lambda}$ denote the set of all ordinals below $\lambda$ of cofinality $\geq \kappa$ and $< \kappa$, respectively. 

Throughout this paper, we identify a poset $P$ with its separative quotient. Thus, $p \leq q \leftrightarrow \forall r \leq p (r || q) \leftrightarrow p \force q \in \dot{G}$, where $\dot{G}$ is the canonical name of $(V,P)$-generic filter. 

We say that $P$ is $\kappa$-centered if there is a sequence of centered subsets $\langle C_{\alpha} \mid \alpha < \kappa \rangle$ with $P = \bigcup_{\alpha<\kappa}C_{\alpha}$. A centered subset is $C \subseteq P$ such that every $X\in [C]^{<\omega}$ has a lower bound in $P$. We call such sequence a centering family of $P$. It is easy to see that the $\kappa$-centeredness implies the $\kappa^{+}$-c.c. 

We say that $P$ is well-met if $\prod X \in P$ for all $X \subseteq P$ with $X$ has a lower bound. If $P$ is well-met, the $\kappa$-centeredness of $P$ is equivalent to the existence of $\kappa$-many filters that cover $P$. Note that every poset that we will deal with in this paper is well-met.

For a filter $F\subseteq Q$, by $Q / F$, we mean the subset $\{q \in Q \mid \forall p \in F(p \parallel q)\}$ ordered by $\leq_Q$. For a given complete embedding $\tau:P \to Q$, $P \ast (Q / \tau ``\dot{G})$ is forcing equivalent with $Q$. We also write $Q / \dot{G}$ for $Q / \tau ``\dot{G}$ if ${\tau}$ is obvious from context. If the inclusion mapping $P \to Q$ is complete, we say that $P$ is a complete suborder of $Q$, denoted by $P \mathrel{\lessdot} Q$. 

In this paper, by ideal, we mean normal and fine ideal. For an ideal $I$ over $\mathcal{P}_{\kappa}\lambda$, $\mathcal{P}(\mathcal{P}_{\kappa,\lambda})/ I$ is $\mathcal{P}(\kappa^{+}) \setminus I$ ordered by $A \leq B \leftrightarrow A \setminus B \in I$. We say that $I$ is saturated and centered if $\mathcal{P}(\kappa^{+}) / I$ has the $\kappa^{++}$-c.c. and $\mathcal{P}(\kappa^{+})/I$ is $\kappa^{+}$-centered, respectively. 

The ideal $I$ over $\kappa^{+}$ is $(\alpha,\beta,<\gamma)$-saturated if $\mathcal{P}(\kappa^{+})/ I$ has the $(\alpha,\beta,<\gamma)$-c.c. in the sense of Laver~\cite{MR673792}. Whenever $I$ is $(\lambda^{+},\lambda^{+},<\lambda)$-saturated, we say that $I$ is strongly saturated. 

For a stationary subset $S \subseteq \lambda$, $P$ is $S$-layered if there is a sequence $\langle P_\alpha \mid \alpha < \lambda\rangle$ of complete suborders of $P$ such that $|P_{\alpha}| < \lambda$ and there is a club $C \subseteq \lambda$ such that $P_\alpha = \bigcup_{\beta < \alpha}P_{\beta}$ for all $\alpha \in S \cap C$. We say that $I$ is layered and if $\mathcal{P}(\mathcal{P}_{\kappa}\lambda)/ I$ is $S$-layered for some stationary $S \subseteq E^{\lambda^{+}}_{\lambda}$.

For cardinals $\kappa < \lambda$ ($\lambda$ is not necessary regular), Silver collapse $S(\kappa,\lambda)$ is the set of all $p$ with the following properties:
\begin{itemize}
 \item $p \in \prod_{\gamma \in [\kappa^{+},\lambda) \cap \mathrm{Reg}}^{\leq \kappa}{^{<\kappa}\gamma}$. 
 \item There is a $\xi < \kappa$ with $\forall \gamma \in \mathrm{dom}(p)(\mathrm{dom}(p(\gamma)) \subseteq \xi)$.
\end{itemize}
$S(\kappa,\lambda)$ is ordered by reverse inclusion. The following properties are well known.
\begin{lem}
 \begin{enumerate}
  \item $S(\kappa,\lambda)$ is $\kappa$-closed.
  \item If $\lambda$ is inaccessible, then $S(\kappa,\lambda)$ has the $\lambda$-c.c. Therefore, $S(\kappa,\lambda) \force \kappa^{+} = \lambda$. 
  \item If $\mu<\lambda$ then $S(\kappa,\mu) \mathrel{\lessdot} S(\kappa,\lambda)$.
 \end{enumerate}
\end{lem}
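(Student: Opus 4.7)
The plan is to handle the three items separately, with (1) a direct union argument, (3) a restriction-and-amalgamation argument, and (2) a standard $\Delta$-system argument that uses the inaccessibility of $\lambda$ crucially. Only (2) requires real work; the other two are bookkeeping with the $\xi$-witness built into the definition of the Silver collapse.

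For (1), given a descending sequence $\langle p_\alpha \mid \alpha < \delta\rangle$ with $\delta < \kappa$, I would set $p = \bigcup_{\alpha<\delta} p_\alpha$. Each $\mathrm{dom}(p_\alpha)$ has size $\leq \kappa$ and $\delta < \kappa$, so $|\mathrm{dom}(p)| \leq \kappa$. Each $p_\alpha$ comes with a witness $\xi_\alpha < \kappa$ uniformly bounding the domains of its coordinates, and since $\kappa$ is regular, $\xi := \sup_{\alpha < \delta}\xi_\alpha < \kappa$ witnesses the bounded-domain clause for $p$; hence $p \in S(\kappa,\lambda)$ is a lower bound. For (3), the inclusion $S(\kappa,\mu) \subseteq S(\kappa,\lambda)$ is order-preserving by inspection. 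To verify completeness I would take $p \in S(\kappa,\lambda)$ and a maximal antichain $A \subseteq S(\kappa,\mu)$; the restriction $p' := p \restriction ([\kappa^+,\mu)\cap\mathrm{Reg})$ lies in $S(\kappa,\mu)$, hence is compatible with some $a \in A$ through a common refinement $q$. The amalgamation $q \cup (p \restriction [\mu,\lambda))$ is then a condition in $S(\kappa,\lambda)$ refining both $a$ and $p$, with $\xi$-witness the maximum of those for $q$ and $p$, still below $\kappa$.

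The main obstacle is (2). Suppose towards contradiction that $A \subseteq S(\kappa,\lambda)$ is an antichain of size $\lambda$. Since $|\mathrm{dom}(p)| \leq \kappa < \lambda = \mathrm{cf}(\lambda)$, the $\Delta$-system lemma produces $A' \subseteq A$ of size $\lambda$ whose domains have a common kernel $D$. The key step is to bound the number of possible roots: a root $p \restriction D$ is a function assigning to each $\gamma \in D$ an element of ${}^{<\kappa}\gamma$, and inaccessibility of $\lambda$ gives $\gamma^{<\kappa} < \lambda$ for every $\gamma < \lambda$, hence $\theta := \sup_{\gamma\in D}\gamma^{<\kappa} < \lambda$ by regularity, and $\theta^{|D|} \leq \theta^{\kappa} < \lambda$ by strong-limitness. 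Thinning to $A'' \subseteq A'$ of size $\lambda$ with a single common root, any two $p,q \in A''$ agree on $\mathrm{dom}(p)\cap\mathrm{dom}(q) = D$, so $p \cup q$ is a function; taking the max of the two $\xi$-witnesses produces a valid condition in $S(\kappa,\lambda)$ extending both, contradicting the antichain.

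The parenthetical $S(\kappa,\lambda) \force \kappa^+ = \lambda$ then follows easily: $\kappa$-closure from (1) preserves cardinals $\leq \kappa^+$, the $\lambda$-c.c.~preserves $\lambda$ and above, and density arguments over the generic supply a surjection $\kappa \twoheadrightarrow \gamma$ for every $\gamma \in [\kappa^+,\lambda)\cap\mathrm{Reg}$, collapsing each such $\gamma$ to have size $\kappa$. Combined, every cardinal strictly between $\kappa$ and $\lambda$ is collapsed while $\kappa$ and $\lambda$ are preserved, so $\lambda = \kappa^+$ in the extension.
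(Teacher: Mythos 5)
The paper states this lemma without proof (it is cited as well known), and your argument is precisely the standard one: the union of a short descending sequence with $\xi$-witness $\sup_{\alpha<\delta}\xi_\alpha<\kappa$ for $\kappa$-closure, the restriction-and-amalgamation argument showing maximal antichains of $S(\kappa,\mu)$ stay predense in $S(\kappa,\lambda)$ for (3), and the generalized $\Delta$-system argument for (2), where inaccessibility of $\lambda$ is used exactly where needed (to apply the $\Delta$-system lemma to $\lambda$ many domains of size $\leq\kappa$, and to bound the number of possible restrictions to the root by $\theta^{\kappa}<\lambda$); all three parts are correct. One small slip in the last paragraph: $\kappa$-closedness only guarantees preservation of cardinals $\leq\kappa$, not $\leq\kappa^{+}$ --- indeed $(\kappa^{+})^{V}$ is itself collapsed by the coordinate at $\gamma=\kappa^{+}$, as your own density argument shows --- but this does not affect the conclusion, since all you need is that $\kappa$ is preserved by closure, $\lambda$ is preserved by the $\lambda$-c.c., and every cardinal strictly in between is collapsed, which your argument provides.
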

The following lemma will be used in a proof of Theorem \ref{main}.
\begin{lem}\label{dispoint}
\begin{enumerate}
 \item If $\mathrm{cf}(\delta)\leq\kappa$ then $S(\kappa,\delta)$ has an anti-chain of size $\delta^{+}$. 
 \item If $\mathrm{cf}(\delta)>\kappa$ and $\delta^{\kappa} = \delta$ then $S(\kappa,\delta)$ forces $(\delta^{+})^{V} \geq \kappa^{+}$. 
\end{enumerate}
\end{lem}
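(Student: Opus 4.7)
My plan is to handle the two clauses in turn.

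For part (1), I would construct an explicit antichain. Since $\mathrm{cf}(\delta)\leq\kappa<\delta$ and $\kappa^+$ is regular, $\delta$ is a singular cardinal strictly above $\kappa^+$, so I can fix a strictly increasing cofinal sequence $\langle \gamma_\alpha \mid \alpha < \mathrm{cf}(\delta)\rangle$ of regular cardinals in $[\kappa^+,\delta)$. For each $\eta \in \prod_{\alpha<\mathrm{cf}(\delta)} \gamma_{\alpha+1}$, I define $p_\eta \in S(\kappa,\delta)$ by $\mathrm{dom}(p_\eta) = \{\gamma_{\alpha+1} : \alpha < \mathrm{cf}(\delta)\}$ and $p_\eta(\gamma_{\alpha+1}) = \{\langle 0,\eta(\alpha)\rangle\}$. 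This is a legitimate Silver condition, witnessed by uniform bound $\xi=1$ and $|\mathrm{dom}(p_\eta)|=\mathrm{cf}(\delta)\leq\kappa$. Distinct $\eta\neq\eta'$ disagree at some $\alpha$, and no common extension can assign two distinct values at coordinate $0$ of $\gamma_{\alpha+1}$; so the $p_\eta$'s form an antichain. By König's theorem, $\prod_\alpha \gamma_{\alpha+1} > \sum_\alpha \gamma_\alpha = \delta$, so this antichain has cardinality at least $\delta^+$.

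For part (2), I would prove the stronger statement that $S(\kappa,\delta)$ satisfies the $\delta^+$-c.c.; since $\delta>\kappa$, this preserves $\delta^+$ as a cardinal and immediately yields $(\delta^+)^V = (\delta^+)^{V[G]} \geq (\kappa^+)^{V[G]}$. The $\delta^+$-c.c.\ follows from a standard $\Delta$-system argument. Given any $\delta^+$ conditions, their supports are $\leq\kappa$-sized subsets of $\delta$; because $\mathrm{cf}(\delta^+)=\delta^+$ and $\mu^\kappa\leq\delta^\kappa=\delta<\delta^+$ for all $\mu<\delta^+$, the $\Delta$-system lemma produces a $\Delta$-subfamily of size $\delta^+$ with root $R$ of cardinality $\leq\kappa$. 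The set of possible restrictions to $R$ has cardinality at most $\prod_{\gamma\in R}\gamma^{<\kappa} \leq \delta^\kappa = \delta$, so pigeonhole yields $\delta^+$ conditions sharing a common restriction. Any two such $p$ and $q$ agree on $R = \mathrm{dom}(p)\cap\mathrm{dom}(q)$, and $p\cup q$ is a partial function with domain of size $\leq \kappa+\kappa=\kappa$ and uniform bound $\max(\xi_p,\xi_q)<\kappa$; hence $p\cup q\in S(\kappa,\delta)$ is a common lower bound.

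The only subtlety, routine but worth checking, is that the pairwise union of root-compatible conditions is again a Silver condition; both constraints (domain size, uniform witness) survive because $\kappa$ is regular. Part (1) is then a direct König calculation once the right antichain is identified.
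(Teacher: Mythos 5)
Your proposal is correct and follows essentially the same route as the paper: part (1) is exactly the paper's antichain of conditions indexed by a product $\prod_{i}\delta_{i}$ over a cofinal sequence of regular cardinals, with incompatibility forced at coordinate $0$ and the size computed by K\"onig's theorem, and part (2) rests on establishing the $\delta^{+}$-c.c. The only difference is that the paper obtains the $\delta^{+}$-c.c.\ in (2) for free from the cardinality count $|S(\kappa,\delta)| = \delta^{\kappa} = \delta$, so your $\Delta$-system argument, while valid, is heavier machinery than needed.
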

\begin{proof}
 (2) follows by a standard cardinal arithmetic. Indeed, The assumption shows $|S(\kappa,\delta)| = \delta$. Therefore $S(\kappa,\delta)$ has the $\delta^{+}$-c.c. We only check about (1). By  $\mathrm{cf}(\delta) < \kappa$, we can fix an increasing sequence of regular cardinals $\langle \delta_{i} \mid i < \mathrm{cf}(\delta) \rangle$ which converges to $\delta$. For $f \in \prod_{i < \mathrm{cf}(\delta)} \delta_{i}$, define $p_f \in S(\kappa,\delta)$ by $\mathrm{dom}(p_f) = \{\delta_{i} \mid i < \mathrm{cf}(\delta)\}$ and $p_f(\delta_{i}) = \{\langle 0,f(i)\rangle\}$. It is easy to see that $f \not= g$ implies $p_f \perp p_g$. Therefore $\{p_f \mid f \in \prod_{i < \mathrm{cf}}\delta_i\}$ is an anti-chain of size $(\delta^{\mathrm{cf}\delta}) \geq \delta^{+}$, as desired.

\end{proof}
\begin{rema}\label{levyremark}
 A similar proof shows the following analogue of Lemma \ref{dispoint}.
\begin{enumerate}
 \item If $\mathrm{cf}(\delta)<\kappa$ then $\mathrm{Coll}(\kappa,<\delta)$ has an anti-chain of size $\delta^{+}$. 
 \item If $\mathrm{cf}(\delta)\geq\kappa$ and $\delta^{<\kappa} = \delta$ then $\mathrm{Coll}(\kappa,<\delta)$ forces $(\delta^{+})^{V} \geq \kappa^{+}$. 
\end{enumerate}
\end{rema}

\section{Proof of Theorem \ref{main}}
Let $j:V \to M$ be a huge embedding with critical point $\kappa$. Fix $\mu < \kappa$. We also fix $f:\kappa \to \kappa \cap \mathrm{Reg}$ with $\kappa \leq j(f)(\kappa) = \lambda$. We may assume that $f(\alpha) \geq \alpha$ for all $\alpha$. 

Let $\langle{P_{\alpha} \mid \alpha \leq \kappa\rangle}$ be the $<\mu$-support iteration such that
\begin{itemize}
 \item $P_{0} = S(\mu,\kappa)$.
 \item $P_{\alpha + 1} = \begin{cases}P_{\alpha} \ast S^{P_\alpha \cap V_{\alpha}}(f(\alpha),\kappa) & \alpha\text{ is good}\\
			  P_{\alpha} & \text{otherwise}
			 \end{cases}$.
\end{itemize}
Here, we say that $\alpha$ is good if $P_{\alpha} \cap V_{\alpha} \mathrel{\lessdot} P_{\alpha}$ has the $\alpha$-c.c., $\alpha$ is inaccessible, and $\alpha \geq \mu$. The set $P_{\alpha} \ast S^{P_\alpha \cap V_{\alpha}}(\alpha,\kappa)$ is the set of all $\langle p,\dot{q}\rangle$ such that $p \in P_\alpha$ and $\dot{q}$ is a $P_{\alpha} \cap V_\alpha$-name for an element of $S^{P_\alpha \cap V_{\alpha}}(\alpha,\kappa)$. 

Define $P = P_{\kappa}$. For every $p \in P$ and good $\alpha$, we may assume that $p(\alpha)$ is $P_\alpha \cap V_{\alpha}$-name. This $P$ is called an universal collapse.  $P$ has the following properties:
\begin{lem}\label{basicpropofunivcoll}
 \begin{enumerate}
  \item $P$ is $\mu$-directed closed and has the $(\kappa,\kappa,<\mu)$-c.c.
  \item $P \subseteq V_{\kappa}$ and $P \force \mu^{+} = \kappa$.
  \item $\kappa$ is good for $j(P)$. In particular, $j(P)_{\kappa} \cap V_{\kappa} = P \mathrel{\lessdot} j(P)_{\kappa}$.
  \item There is a complete embedding $\tau:P \ast \dot{S}(\lambda,j(\kappa)) \to j(P)_{\kappa +1}\lessdot j(P)$ such that $\tau(p,\emptyset) = p$ for all $p \in P$. 
 \end{enumerate}
\end{lem}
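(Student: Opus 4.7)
The plan is to handle the four clauses in order; the bulk of the work lies in (1), while (2)--(4) follow by elementarity and the very definition of ``good''.

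For (1), $\mu$-directed closure comes directly from the iteration format: $P_0 = S(\mu,\kappa)$ is $\mu$-closed, each active factor is (a name for) a Silver collapse $S(f(\alpha),\kappa)$ which is $f(\alpha)$-closed with $f(\alpha) \geq \alpha \geq \mu$ at good stages, and the $<\mu$-support ensures directed closure of the full iteration. The $(\kappa,\kappa,<\mu)$-c.c.\ is the substantive content. The standard Laver-style argument I would run is: given $\kappa$ conditions, apply the $\Delta$-system lemma (inaccessibility of $\kappa$ controls $\alpha^{<\mu}$ below $\kappa$) to extract $\kappa$ whose supports form a $\Delta$-system with root $r$ of size $<\mu$; then refine by induction along $r$, using the $\kappa$-c.c.\ of each Silver-collapse factor, so that the values on $r$ lie in a single compatibility class; the refined family is pairwise $<\mu$-compatible by $\mu$-directed closure off the root. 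For (2), $P \subseteq V_\kappa$ because each condition has $<\mu$-support, each coordinate is a $P_\alpha \cap V_\alpha$-name living in $V_\kappa$, and $\kappa$ is inaccessible; then $P_0 = S(\mu,\kappa)$ already collapses every cardinal in $(\mu,\kappa)$ onto $\mu$, while the $(\kappa,\kappa,<\mu)$-c.c.\ preserves $\kappa$, yielding $P \force \mu^{+} = \kappa$.

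For (3), since $\mathrm{crit}(j) = \kappa$ we have $j\restriction V_\kappa = \mathrm{id}$, so $j(f)\restriction\kappa = f$; because hugeness gives $V_{j(\kappa)+1} \subseteq M$, the recipe computing $j(P)_\kappa$ inside $M$ agrees, stage by stage, with the one defining $P$ inside $V$, and therefore $j(P)_\kappa = P$ as posets. The remaining conditions for goodness of $\kappa$ for $j(P)$ are then immediate: $P \cap V_\kappa = P \mathrel{\lessdot} P$ is trivial, $P$ has the $\kappa$-c.c.\ by (1), $\kappa$ is inaccessible, and $\kappa \geq \mu$. For (4), since $\kappa$ is good for $j(P)$, the iteration definition directly gives
\[
j(P)_{\kappa+1} \;=\; j(P)_\kappa \ast \dot{S}^{j(P)_\kappa \cap V_\kappa}(j(f)(\kappa),j(\kappa)) \;=\; P \ast \dot{S}(\lambda,j(\kappa)),
\]
using again that the Silver collapse is absolute between $M$ and $V$ since $V_{j(\kappa)+1}\subseteq M$ and the $\kappa$-c.c.\ of $P$ preserves the relevant closure after forcing. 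The desired $\tau$ is then the identity on the first coordinate, so $\tau(p,\emptyset) = p$ is built in, and $j(P)_{\kappa+1} \mathrel{\lessdot} j(P)$ is the standard fact that initial segments of such iterations are complete suborders.

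The only substantive obstacle I foresee is the Laver-style $(\kappa,\kappa,<\mu)$-c.c.\ in (1): verifying that the $\Delta$-system refinement actually closes up, both at successor stages (where the previous chain condition must be propagated through the Silver factor) and at limit stages of the iteration, is the one place that requires real care. Once that chain condition is in hand, (2) reduces to cardinal arithmetic on $P_0$, while (3) and (4) are bookkeeping on top of elementarity and the definition of goodness.
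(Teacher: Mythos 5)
The central problem is your identification $j(P)_\kappa = P$ in (3), which then propagates into (4). Elementarity sends the \emph{collapse target} $\kappa$ to $j(\kappa)$: for a good stage $\alpha < \kappa$, the $\alpha$-th factor of $j(P)$ is $S^{j(P)_\alpha \cap V_\alpha}(f(\alpha), j(\kappa))$, not $S^{P_\alpha\cap V_\alpha}(f(\alpha),\kappa)$. So $j(P)_\kappa$ is strictly larger than $P$; what is true (and what the lemma actually asserts) is only $j(P)_\kappa \cap V_\kappa = P$. Consequently the real content of (3) is not ``$P \cap V_\kappa = P \mathrel{\lessdot} P$ is trivial'' but that the inclusion $P \to j(P)_\kappa$ is a \emph{complete} embedding, i.e.\ that reducts exist; this is proved by induction along the iteration, using $S(f(\alpha),\kappa) \mathrel{\lessdot} S(f(\alpha),j(\kappa))$ coordinatewise together with the coherence of the $P_\alpha\cap V_\alpha$-names (which is exactly why the construction insists on $V_\alpha$-names). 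Similarly, $j(P)_{\kappa+1} = j(P)_\kappa \ast S^{j(P)_\kappa\cap V_\kappa}(\lambda, j(\kappa))$ has first factor $j(P)_\kappa \neq P$, so your displayed equality $j(P)_{\kappa+1} = P \ast \dot S(\lambda,j(\kappa))$ is false; (4) claims only a complete embedding $\tau(p,\dot q) = (p,\dot q)$, and one must check that a condition $(r,\dot s)$ with $r \in j(P)_\kappa$ has a reduct in $P \ast \dot S(\lambda,j(\kappa))$, using $P \mathrel{\lessdot} j(P)_\kappa$ and the fact that $\dot s$ is a name over the common small suborder $P$.

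There is also a soft spot in your sketch of (1): ``refine by induction along $r$, using the $\kappa$-c.c.\ of each Silver-collapse factor, so that the values on $r$ lie in a single compatibility class'' is not a valid mechanism. The $\kappa$-c.c.\ of a factor of size $\kappa$ does not allow you to extract, from $\kappa$ many conditions, $\kappa$ many whose root values are pairwise (let alone $<\mu$-wise) forced compatible, and counting root values outright fails because there are $\kappa$ many candidate names. The standard (Kunen-style) argument is a two-level $\Delta$-system: first strengthen so that at each root coordinate the name decides the domain and the bounding ordinal of the named Silver condition, then run a $\Delta$-system on those domains as well, and count restrictions to the inner root; the counting succeeds only because the coordinates are $P_\alpha\cap V_\alpha$-names over a poset of size $<\kappa$, so by inaccessibility there are fewer than $\kappa$ possibilities. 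You flag this step yourself as unverified, and it is precisely where the work lies. For calibration: the paper gives no proof of this lemma at all, quoting it as the standard package of properties of Kunen's universal collapse (cf.\ \cite[Section 7.7]{MR2768692}), so the burden of your write-up is exactly the two points above, and as written (3) and (4) assert something false while (1) leaves the key refinement unproved.
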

Note that $j(P)$ has the $j(\kappa)$-c.c. by the hugeness of $j$. By Lemma \ref{basicpropofunivcoll} (2), in the extension by $P \ast \dot{S}(\lambda,j(\kappa))$, $\mu^{+} = \kappa$, $\lambda^{+} = j(\kappa)$, and every cardinal between $\kappa$ and $\lambda$ are preserved.

Kunen proved Theorem \ref{Kunenideal} in the case of $\kappa = \lambda$ and $\mu = \omega$. Moreover,
\begin{thm}\label{Kunenideal}
 $P \ast \dot{S}(\lambda,j(\kappa))$ forces that ${\mathcal{P}}_{\kappa}\lambda$ carries a saturated ideal $\dot{I}$.
\end{thm}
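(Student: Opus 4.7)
The plan is to run the standard Kunen-style generic-ultrapower construction. Fix the complete embedding $\tau: P \ast \dot{S}(\lambda, j(\kappa)) \to j(P)$ provided by Lemma~\ref{basicpropofunivcoll}(4), and let $\dot{Q}$ name the corresponding quotient, so that $j(P)$ factors as $(P \ast \dot{S}(\lambda,j(\kappa))) \ast \dot{Q}$. Let $G \ast g$ be $V$-generic for $P \ast S(\lambda, j(\kappa))$; the desired ideal will be defined in $V[G \ast g]$.

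The central step is to lift $j$ through the forcing. Let $K$ be $V[G \ast g]$-generic for $Q$; then $G \ast g \ast K$ corresponds via $\tau$ to a $V$-generic $G^{*}$ for $j(P)$. Because $P \subseteq V_{\kappa}$ by Lemma~\ref{basicpropofunivcoll}(2) and $\mathrm{crit}(j) = \kappa$, we have $j(p) = p$ for every $p \in P$, and so $j``G = G \subseteq G^{*}$ via $\tau$. Hence the prescription $\tilde{j}(\dot{x}^{G}) := j(\dot{x})^{G^{*}}$ yields a well-defined elementary embedding $\tilde{j}: V[G] \to M[G^{*}]$ extending $j$. Working in $V[G \ast g]$, I would define
\[
I = \bigl\{\, A \subseteq \mathcal{P}_{\kappa}\lambda : 1_{Q} \force_{Q} j``\lambda \notin \tilde{j}(\check{A}) \,\bigr\}.
\]
Using elementarity of $\tilde{j}$ and the fact that $j``\lambda$ is the canonical element of $\tilde{j}(\mathcal{P}_{\kappa}\lambda)$ (since $|j``\lambda| = \lambda < j(\kappa) = \tilde{j}(\kappa)$ and $j``\lambda \subseteq j(\lambda)$), the standard checks show that $I$ is a (normal, fine) $\kappa$-complete ideal on $\mathcal{P}_{\kappa}\lambda$.

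For saturation I would argue by contradiction. Suppose $\{A_{\alpha} : \alpha < j(\kappa)\}$ is an antichain in $\mathcal{P}(\mathcal{P}_{\kappa}\lambda)/I$. For each $\alpha$ pick $q_{\alpha} \in Q$ forcing $j``\lambda \in \tilde{j}(\check{A}_{\alpha})$. Since $A_{\alpha} \cap A_{\beta} \in I$ for $\alpha \neq \beta$, the $q_{\alpha}$ form an antichain of size $j(\kappa)$ in $Q$; but $Q$ inherits the $j(\kappa)$-c.c.\ from $j(P)$ (which has the $j(\kappa)$-c.c.\ by hugeness of $j$), giving the contradiction. The main technical point is verifying that the quotient $Q$ inherits the $j(\kappa)$-c.c.\ from $j(P)$ and keeping careful track of $\tau$ so that $j``G \subseteq G^{*}$; once these identifications are in place, the rest is the routine Kunen generic-ultrapower argument.
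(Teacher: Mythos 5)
There is a genuine gap: your embedding $\tilde{j}$ is only defined on $V[G]$, but the ideal has to live in $V[G\ast g]$ and measure \emph{all} subsets of $\mathcal{P}_{\kappa}\lambda$ there. The Silver collapse $S(\lambda,j(\kappa))$ adds many new subsets of $\mathcal{P}_{\kappa}\lambda$ (it collapses cardinals in $(\lambda,j(\kappa))$, hence adds new subsets of $\lambda$), so for such $A$ the expression $\tilde{j}(\check{A})$ is undefined and your $I$ is at best an ideal on $\mathcal{P}(\mathcal{P}_{\kappa}\lambda)\cap V[G]$; normality and fineness in $V[G\ast g]$ are likewise not addressed. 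The heart of Kunen's proof is precisely the step you omit: lifting $j$ through the Silver collapse. One must check that $j``g \in M[G^{*}]$ (this is where hugeness, i.e. ${}^{j(\kappa)}M\subseteq M$, is used) and that $\bigcup j``g$ is a condition in $j(S(\lambda,j(\kappa))) = S(j(\lambda),j(j(\kappa)))^{M[G^{*}]}$ --- this is exactly what the uniform bound $\xi<\lambda$ in the definition of the Silver collapse buys, and it is the reason the theorem is stated for $S$ rather than an arbitrary collapse. Only below this master condition does one get $\hat{j}:V[G\ast g]\to M[G^{*}\ast h]$ and a sensible definition $I=\{A : \force \ j``\lambda \notin \hat{j}(A)\}$.

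Once the definition is corrected, your saturation argument also needs repair: the conditions $q_{\alpha}$ deciding ``$j``\lambda \in \hat{j}(A_{\alpha})$'' now live in the two-step quotient $\bigl(j(P)/\dot{G}\ast\dot{H}\bigr) \ast \bigl(j(\dot{S}) \text{ below the master condition}\bigr)$, and the second factor is a closed forcing of size $j(j(\kappa))$ which certainly does not have the $j(\kappa)$-c.c.; so you cannot quote the chain condition of $j(P)$ directly. The classical fix is to show that membership of $j``\lambda$ in $\hat{j}(A)$ is decided by the first factor alone (using the closure of the second factor together with the master condition), which is essentially the content of the paper's Lemma~\ref{fms} ($\mathcal{P}(\mathcal{P}_{\kappa}\lambda)/\dot{I} \simeq j(P)/\dot{G}\ast\dot{H}$, due to Foreman--Magidor--Shelah), after which the $j(\kappa)$-c.c. transfers to the quotient as in Lemma~\ref{quotientsaturation}. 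The paper itself proves the theorem by citing this standard construction (Foreman's handbook treatment of Kunen's argument); your sketch reproduces the easy outer shell of that argument but omits the master-condition lift and the reduction of the antichain to the $j(P)$-part, which are where the actual work lies.
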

\begin{proof}
 The same proof as in \cite[Section 7.7]{MR2768692} works.
\end{proof}
The ideal which we call ``Kunen's saturated ideal'' is this $\dot{I}$. Studying the saturation of $\dot{I}$ will be reduced to that of some quotient forcing. Indeed, Theorem \ref{main} follows from Lemmas \ref{fms} and \ref{rephrasedmain}. We only give a proof of Lemma \ref{rephrasedmain}. In \cite{MR942519}, Foreman, Magidor and Shelah proved Lemma \ref{fms} in the case of $\kappa = \lambda$.
\begin{lem}\label{fms}
 $P \ast \dot{S}(\lambda,j(\kappa))$ forces ${\mathcal{P}}({\mathcal{P}}_{\kappa}\lambda) / \dot{I} \simeq j(P) / \dot{G} \ast \dot{H}$. Here, $\dot{G} \ast \dot{H}$ is the canonical name for generic filter.
\end{lem}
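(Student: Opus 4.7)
The plan is to run a standard Foreman--Magidor--Shelah-style duality, generalized from $\kappa = \lambda$ (as in \cite{MR942519}) to $\mathcal{P}_{\kappa}\lambda$. The key structural input has already been provided: Lemma \ref{basicpropofunivcoll}(4) supplies a complete embedding $\tau : P\ast\dot{S}(\lambda,j(\kappa)) \to j(P)$ fixing $P$, so that a $j(P)$-generic $G^{*}$ over $V$ decomposes as $\tau``(G\ast H)$ followed by a $j(P)/\dot{G}\ast\dot{H}$-generic $K$, and the usual master-condition argument for the Silver collapse lifts $j$ uniquely to an elementary $j^{+}:V[G] \to M[G^{*}]$ inside $V[G^{*}]$.

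Fix $G \ast H$ generic for $P \ast \dot{S}(\lambda,j(\kappa))$ and work in $V[G\ast H]$. Let $\dot{s}$ be the canonical $(j(P)/\dot{G}\ast\dot{H})$-name for $j^{+}``\lambda$; by hugeness and the $j(\kappa)$-closure of $M[G^{*}]$ in $V[G^{*}]$, $\dot{s}$ is well-defined and $\dot{s}$ is forced to lie in $j^{+}(\mathcal{P}_{\kappa}\lambda)^{V[G\ast H]}$. Inspection of the construction of $\dot{I}$ in Theorem \ref{Kunenideal} (as in \cite[\S7.7]{MR2768692}) shows that $\dot{I}$ is precisely the ideal induced by $\dot{s}$, that is, $A \in \dot{I} \iff \Vdash_{j(P)/\dot{G}\ast\dot{H}} \dot{s} \notin j^{+}(\check{A})$. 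This yields a map
\[
\Phi : \mathcal{P}(\mathcal{P}_{\kappa}\lambda)/\dot{I} \longrightarrow \mathrm{RO}\bigl(j(P)/\dot{G}\ast\dot{H}\bigr), \qquad [A] \mapsto \bigl\llbracket \dot{s} \in j^{+}(\check{A}) \bigr\rrbracket.
\]
Well-definedness and injectivity are immediate from the above characterization of $\dot{I}$, and $\Phi$ preserves Boolean operations by elementarity of $j^{+}$ (e.g.\ $\dot{s} \in j^{+}(A\cap B) \leftrightarrow \dot{s} \in j^{+}A \cap j^{+}B$) and because $\dot{s}$ is forced to be a definite set. Monotonicity gives that $\Phi$ is an order-embedding.

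The substantive step, and the one I expect to be the main obstacle, is density of the image. Given any $q \in j(P)/\dot{G}\ast\dot{H}$, I need to produce $A \in \dot{I}^{+}$ with $\Phi([A]) \leq q$. The idea is to ``read off'' a subset of $\mathcal{P}_{\kappa}\lambda$ from $q$: using the $j(\kappa)$-c.c.\ of $j(P)$ and the fact that $j^{+}``\lambda$ is coded by the part of $K$ living in $\tau(\dot{S}(\lambda,j(\kappa)))$, any condition $q$ can be refined to one that decides enough of $\dot{s}$ for a set $A \subseteq \mathcal{P}_{\kappa}\lambda$ to be defined by $A := \{ s \in \mathcal{P}_{\kappa}\lambda : q \text{ is compatible with } \llbracket\dot{s}\supseteq \check{s}\rrbracket\}$ (or a similar ``trace'' construction); elementarity of $j^{+}$ then gives $q \Vdash \dot{s} \in j^{+}(\check{A})$, i.e.\ $q \leq \Phi([A])$, while $A \in \dot{I}^{+}$ because $q > 0$. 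Because $P$ and $S(\lambda,j(\kappa))$ are arranged so that no new subsets of $\mathcal{P}_{\kappa}\lambda$ of size $<\kappa$ obstruct this reading, the argument goes through essentially as in \cite{MR942519}, with the only adaptation being the replacement of $\kappa^{+}$ by $\lambda^{+} = j(\kappa)$ throughout. Combining injectivity, order-preservation, and density yields the desired isomorphism $\mathcal{P}(\mathcal{P}_{\kappa}\lambda)/\dot{I} \simeq j(P)/\dot{G}\ast\dot{H}$.
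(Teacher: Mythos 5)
Your overall route is the same as the paper's: the paper proves Lemma \ref{fms} simply by invoking the duality argument of Claim 7 of \cite{MR942519} (transplanted from $\kappa=\lambda$ to $\mathcal{P}_{\kappa}\lambda$), and that is exactly the scheme you outline --- the induced ideal, the map $[A]\mapsto$ (Boolean value of ``$\dot{s}\in j^{+}(\check{A})$''), injectivity from the definition of $\dot{I}$, and density of the image. However, two of the details you supply would not survive being written out. First, the lift as you state it, $j^{+}\colon V[G]\to M[G^{*}]$, is too short: $\dot{I}$ and the sets $A$ you feed into $j^{+}$ live in $V[G\ast H]$, so you need $j$ extended to all of $V[G\ast H]$, and the very characterization of $\dot{I}$ you use does not parse otherwise. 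This is precisely what the master condition is for: $\bigcup j``H$ is a condition of $j(\dot{S}(\lambda,j(\kappa)))$ as computed in $M[G^{*}]$ (here one uses the uniform bound $\xi<\lambda$ in the definition of the Silver collapse, regularity of $j(\lambda)$ in $M$, and closure of $M[G^{*}]$ under $j(\kappa)$-sequences), and one must either produce an $M[G^{*}]$-generic filter below it or fold that quotient forcing into the definition of $\dot{I}$ before the formula $A\in\dot{I}\iff\Vdash\dot{s}\notin j^{+}(\check{A})$ makes sense.

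Second, and more substantively, the density step --- which you correctly identify as the main obstacle, and which is the actual content of the cited Claim 7 --- does not work as sketched. Setting $A=\{s\in\mathcal{P}_{\kappa}\lambda : q \text{ is compatible with the value of } \dot{s}\supseteq\check{s}\}$ gives no reason why $q$ should force $\dot{s}\in j^{+}(\check{A})$: compatibility of $q$ with each of those Boolean values separately says nothing about what $q$ forces, and elementarity cannot be applied in the way you suggest to a set defined from $q$, $j$, and the quotient algebra, so the asserted inequality $q\leq\Phi([A])$ is unjustified (and $I$-positivity of $A$ does not follow merely from $q\neq 0$). The argument of \cite{MR942519} gets density by representing conditions of $j(P)$ by functions via the measure derived from $j$ and using normality/genericity to extract from $q$ an $\dot{I}$-positive set whose Boolean value lies below $q$; this is genuinely different from the compatibility trace you propose. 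So, as a proposal, you have located the right framework and the right hard point, but the fill-in offered for that hard point is flawed, and repairing it amounts to reproducing the cited argument.
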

\begin{proof}
 The same proof as in \cite[Claim 7]{MR942519} works.
\end{proof}

\begin{lem}\label{rephrasedmain}
 $P \ast \dot{S}(\lambda,j(\kappa))$ forces $j(P) / \dot{G} \ast \dot{H}$ is not $\lambda$-centered. 
\end{lem}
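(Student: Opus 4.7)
The plan is to work in $V[G\ast H]$ and exhibit, inside the quotient $Q := j(P)/\dot G\ast\dot H$, a Silver-style incompatibility pattern that obstructs any covering by $\lambda$ centered subsets. Using the complete embedding $\tau$ of Lemma \ref{basicpropofunivcoll}(4) and the fact that $j(P)_\kappa = P$ (Lemma \ref{basicpropofunivcoll}(3)), I would factor $j(P) = P \ast \dot S(\lambda, j(\kappa)) \ast \dot T$, where $\dot T$ is a $P\ast\dot S(\lambda, j(\kappa))$-name for the tail iteration of $j(P)$ from stage $\kappa+1$ to $j(\kappa)$; then $Q$ is forcing-equivalent in $V[G\ast H]$ to the interpretation of $\dot T$.

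Next, I would locate a good stage $\alpha \in (\kappa, j(\kappa))$ for $j(P)$; at such a stage the tail iteration adds the Silver collapse $\dot S_\alpha := S^{j(P)_\alpha \cap V_\alpha}(j(f)(\alpha), j(\kappa))$, which embeds completely into $Q$. Because $S(\lambda, j(\kappa))$ collapses every cardinal in $(\lambda, j(\kappa))$ to $\lambda$, one can choose, in $V[G\ast H]$, an ordinal $\delta < j(\kappa)$ with $\mathrm{cf}^{V[G\ast H]}(\delta) \leq \lambda$. Working in the intermediate model $j(P)_\alpha \cap V_\alpha$, I would then mimic the construction of Lemma \ref{dispoint}(1): fix a cofinal sequence $\langle \delta_i : i < \mathrm{cf}(\delta)\rangle$ in $\delta$ through appropriate regulars and form the family $\{p_g : g \in \prod_i \delta_i\}$ with $\mathrm{dom}(p_g) = \{\delta_i : i < \mathrm{cf}(\delta)\}$ and $p_g(\delta_i) = \{\langle 0, g(i)\rangle\}$. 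Distinct $p_g, p_{g'}$ admit no common Silver extension, and since $\dot S_\alpha$ embeds completely into $Q$, they have no common extension in $Q$ either.

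Finally, I would use this family to obstruct a hypothetical $\lambda$-centering $\langle C_\beta : \beta < \lambda\rangle$ of $Q$: any such centering, by pigeonhole applied to the family $\{p_g\}$, would place two distinct $p_g, p_{g'}$ into a common $C_\beta$, contradicting the pairwise incompatibility of $p_g$ and $p_{g'}$ together with the centeredness of $C_\beta$. This would prove $Q$ not $\lambda$-centered.

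The main obstacle is reconciling the cardinality of the Silver family with the $\lambda^+$-c.c.\ that $Q$ inherits from the saturation of $\dot I$ via Theorem \ref{Kunenideal} and Lemma \ref{fms}: an outright antichain of size $\lambda^+$ in $Q$ is forbidden, so the argument has to be delicate about exactly how the incompatibility from the Silver combinatorics is transferred into $Q$. The technical crux is to choose the stage $\alpha$, the ordinal $\delta$, and the cofinal sequence $\langle \delta_i \rangle$ so that the constructed family is ``large enough'' --- roughly $\lambda^+$-sized in the sense that it cannot fit into any $\lambda$-centering, even if its literal $V[G\ast H]$-cardinality, as an antichain, is capped at $\lambda$ by saturation. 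Unpacking the interplay between the intermediate model $j(P)_\alpha \cap V_\alpha$, the complete embedding into $Q$, and the $\lambda^+$-c.c.\ of the quotient is where the real work of the proof lies.
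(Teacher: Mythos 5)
Your setup (factoring $j(P)$ through $P\ast\dot S(\lambda,j(\kappa))$, fixing a good stage $\alpha>\kappa$, and producing a Silver-style antichain in $S^{(\cdot)}(\alpha,\delta)$ via the construction of Lemma \ref{dispoint}(1)) matches the paper's, but the proof has a genuine gap exactly at the point you flag as ``where the real work lies'': the final pigeonhole step does not go through, and you do not supply the idea that makes it work. In $V[G\ast H]$ the quotient $j(P)/G\ast H$ has the $j(\kappa)$-c.c.\ with $j(\kappa)=\lambda^{+}$, so your family $\{p_g\}$, viewed as an antichain in the quotient, has size at most $\lambda$ there; an antichain of size $\lambda$ is perfectly compatible with $\lambda$-centeredness (put one element in each centered piece), so no contradiction follows. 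There is no way to make the family ``large enough'' inside $V[G\ast H]$ itself.

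The missing idea is to catch the contradiction in an \emph{intermediate} extension $V[G][H_\delta]$ by $P\ast\dot S(\lambda,\delta)$, where $(\delta^{+})^{V}$ is still $\geq\lambda^{+}$ (this requires choosing $\delta$ with $\lambda<\mathrm{cf}(\delta)\leq\alpha$ and $\delta$ strong limit, computed in $V$ --- not $\mathrm{cf}^{V[G\ast H]}(\delta)\leq\lambda$ as you propose, which is both in the wrong model and omits the lower bound needed so that Lemma \ref{dispoint}(2) preserves $(\delta^{+})^{V}\geq\lambda^{+}$). There the antichain from Lemma \ref{dispoint}(1) genuinely has size $\geq\lambda^{+}$ in $Q\ast S^{Q}(\alpha,\delta)/G\ast H_\delta$, where $Q=j(P)_\alpha\cap V_\alpha$. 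To get the contradiction one must then show that the hypothetical centering family of $j(P)_{\alpha+1}/\dot G\ast\dot H$ already induces a $\lambda$-centering of this smaller quotient \emph{in} $V[G][H_\delta]$. The paper does this by assigning to each $q$ an ordinal $\rho(q)<j(\kappa)$ bounding the supports of the conditions deciding all statements $q\in\dot C_\xi$, and choosing $\delta$ in the club of closure points of $\beta\mapsto\sup\{\rho(q)\mid q\in Q\ast S^{Q}(\alpha,\beta)\}$, so that membership of every relevant $q$ in every $\dot C_\xi$ is decided by $P\ast\dot S(\lambda,\delta)$. Without this reflection-to-an-intermediate-model step your argument cannot close.
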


\begin{proof}

Note that $\{\alpha < j(\kappa) \mid \alpha$ is good$\}$ is unbounded in $j(\kappa)$. We fix a good $\alpha > \kappa$. It is enough to prove that $j(P)_{\alpha + 1} / \dot{G} \ast \dot{H}$ is not $\lambda$-centered in the extension. 

We show by contradiction. Suppose the existence of a centering family $\langle \dot{C}_{\xi} \mid \xi < \lambda \rangle$ of $j(P)_{\alpha + 1} / \dot{G} \ast \dot{H}$ is forced by some condition. We may assume that each $\dot{C}_{\xi}$ is forced to be a filter. To simplify notation, we assume $P \ast \dot{S}(\lambda,j(\kappa))$ forces the existence of such a 
centering family. 
By the $\kappa$-c.c. of $P$, for every $\langle{p,\dot{q}}\rangle \in P \ast \dot{S}(\lambda,j(\kappa))$, $P \force \dot{q} \in \dot{S}(\lambda,\beta)$ for some $\beta < j(\kappa)$. For each $q \in j(P)_{\alpha+1}$, let $\rho(q)$ be defined by the following way:

For $\xi < \lambda$, let $\mathcal{A}_{q}^{\xi} \subseteq P \ast \dot{S}(\lambda,j(\kappa))$ be a maximal anti-chain such that, for every $r \in \mathcal{A}_{q}^{\xi}$, $r$ decides $q \in \dot{C}_{\xi}$. $\rho(q)$ is the least ordinal $\beta<j(\kappa)$ such that $P \force \dot{q} \in \dot{S}(\lambda,\beta)$ for every $\langle p,\dot{q}\rangle \in \bigcup_{\xi}\mathcal{A}_{q}^{\xi}$. 

 We put $Q = j(P)_{\alpha} \cap V_{\alpha}$. Let $C \subseteq j(\kappa)$ be a club generated by $\beta \mapsto \sup\{\rho(q) \mid q \in Q \ast {S}^{Q}(\alpha,\beta) \}$.  Since $j(\kappa)$ is inaccessible, we can find a strong limit cardinal $\delta \in C \cap E^{j(\kappa)}_{> \lambda} \cap E^{j(\kappa)}_{\leq\alpha} \setminus (\alpha + 1)$. 

 By the $\kappa$-c.c. of $P$ and Lemma \ref{dispoint} (2), $P \ast \dot{S}(\lambda,\delta) \force (\delta^{+})^{V} \geq \lambda^{+}$. We will discuss in the extension by $P \ast \dot{S}(\lambda,\delta)$. Let $\dot{G} \ast \dot{H}_{\delta}$ be the canonical $P \ast \dot{S}(\lambda,\delta)$-name for a generic filter.

 By Lemma \ref{dispoint} (1), $P \ast \dot{S}(\lambda,\delta) \force S^{V}(\alpha,\delta)$ has an anti-chain of size $(\delta^{+})^{V} \geq \lambda^{+}$. By Lemma \ref{basicpropofunivcoll} (4), this $S^{V}(\alpha,\delta)$-anti-chain defines an anti-chain in $Q \ast S^Q(\alpha,\delta) / \dot{G} \ast \dot{H}_{\delta}$ in the extension. Thus, $Q \ast S^Q(\alpha,\delta) / \dot{G} \ast \dot{H}_{\delta}$ is forced not to have the $\lambda^{+}$-c.c., and thus not to be $\lambda$-centered. 

On the other hand, a name of a centering family of $j(P)_{\alpha + 1} / \dot{G} \ast \dot{H}$ defines a centering family of $Q \ast S^{Q}(\alpha,\delta) / \dot{G} \ast \dot{H}_{\delta}$ in the extension as follows.
\begin{clam}\label{claim1}
 $P \ast \dot{S}(\lambda,\delta)$ forces that $Q \ast {S}^{Q}(\alpha,\delta) / \dot{G} \ast \dot{H}_{\delta}$ is $\lambda$-centered.
\end{clam}

\begin{proof}[Proof of Claim]
 For every $q \in \bigcup_{\beta<\delta} Q \ast {S}^{Q}(\alpha,\beta)$, by $\rho(q) < \delta$, the statement $q \in \dot{C}_{\xi}$ has been decided by $P \ast \dot{S}(\lambda,\delta)$ for all $\xi < \lambda$. Let $G \ast H$ be an arbitrary $(V,P \ast \dot{S}(\lambda,j(\kappa)))$-generic filter. Note that $G \ast H_\delta = G \ast H \cap (P \ast \dot{S}(\lambda,\delta))$ is $(V,P \ast \dot{S}(\lambda,\delta))$-generic. Let $D_{\xi}$ be defined by 
\begin{center}
 $\langle p,\dot{q}\rangle \in D_\xi$ if and only if $\langle p,\dot{q} \upharpoonright \beta \rangle \in \dot{C}_\xi$ forced by $G \ast H_\delta$ for every $\beta<\delta$. 
\end{center}
 It is easy to see that $D_{\xi}$ is a filter over $Q\ast {S}^{Q}(\alpha,\delta) / {G} \ast {H}_{\delta}$. We claim that $\{D_\xi\mid \xi < \lambda\}$ covers $Q \ast {S}^{Q}(\alpha,\delta) / G \ast H_\delta$. For each $\langle p,\dot{q}\rangle \in Q\ast \dot{S}(\alpha,\delta) / {G} \ast {H}_{\delta}$, in $V[G][H]$, there is a $\xi$ such that $\langle{p,\dot{q}}\rangle \in \dot{C}_\xi^{G \ast H}$. Then $\langle{p,\dot{q}\upharpoonright \beta} \rangle \in \dot{C}_{\xi}^{G \ast H}$ for every $\beta < \delta$, since $\dot{C}_{\xi}^{G \ast H}$ is a filter. In particular, $\langle{p,\dot{q}\upharpoonright \beta} \rangle \in \dot{C}_{\xi}$ is forced by $G\ast H_\delta$ for every $\beta < \delta$. By the definition of $D_\xi$, $\langle {p,\dot{q}}\rangle \in D_{\xi}$ in $V[G][H_{\delta}]$, as desired.
\end{proof}
This claim shows the contradiction. The proof is completed.
\end{proof}
Let us give a proof of Theorem \ref{main}.
\begin{proof}[Proof of Theorem \ref{main}]
 By Lemmas \ref{fms} and \ref{rephrasedmain}, we have $\force \mathcal{P}({\mathcal{P}}_{\kappa}\lambda) / \dot{I}$ is not $\lambda$-centered. Thus, $\dot{I}$ is not centered ideal in the extension.
\end{proof}

Lastly, we list other saturation properties of $\dot{I}$. 
\begin{thm}\label{extentofkunen}$P \ast \dot{S}(\lambda,j(\kappa))$ forces that
 \begin{enumerate}
  \item $\dot{I}$ is $(j(\kappa),j(\kappa),<\mu)$-saturated.
  \item $\dot{I}$ is not $(j(\kappa),\mu,\mu)$-saturated. In particular, $\dot{I}$ is not strongly saturated.
  \item $\dot{I}$ is layered.
  \item $\dot{I}$ is not centered. 
 \end{enumerate}
\end{thm}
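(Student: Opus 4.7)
The plan is to treat the four clauses essentially independently. Clause (4) is immediate, being the conclusion of Theorem~\ref{main}, already proved via Lemmas~\ref{fms} and~\ref{rephrasedmain}.

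For (3), I would use the filtration coming from the iteration stages of $j(P)$: set $P_\alpha = j(P)_\alpha/(\dot{G}\ast\dot{H})$ for $\alpha < j(\kappa)$. The analog of Lemma~\ref{basicpropofunivcoll}(2) applied to $j(P)$ inside $M$ gives $|j(P)_\alpha| < j(\kappa) = \lambda^+$, so each $P_\alpha$ is a complete suborder of $\mathcal{P}(\mathcal{P}_\kappa\lambda)/\dot{I} \simeq j(P)/(\dot{G}\ast\dot{H})$ of size at most $\lambda$. Because $j(P)$ uses $<\mu$-support and $\mu < \lambda = \mathrm{cf}(\lambda)$, the iteration is continuous at every $\alpha$ of cofinality $\geq \mu$, in particular at every $\alpha \in E^{j(\kappa)}_\lambda$, and this continuity descends to the quotient. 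Hence $S = E^{j(\kappa)}_\lambda$ together with $C = j(\kappa)$ witnesses $S$-layeredness.

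For (1), the plan is to combine the $\mu$-directed closedness of $j(P)$ (inherited from Lemma~\ref{basicpropofunivcoll}(1) by elementarity, using $j(\mu) = \mu$) with the $j(\kappa)$-c.c.\ of $j(P)$ coming from hugeness, and transfer this to the quotient. Given a $j(\kappa)$-family in the quotient, one lifts to $j(P)$-conditions, applies a $\Delta$-system refinement to the $<\mu$-supports of the iteration, and extracts a $j(\kappa)$-subfamily whose members agree on a common root and have pairwise disjoint supports off the root. Any $<\mu$-subfamily of the refined family then admits a common lower bound by amalgamating off-root coordinates, and a standard quotient-theoretic argument checks that this lower bound stays compatible with $\dot{G}\ast\dot{H}$.

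For (2), I would follow the template of Lemma~\ref{rephrasedmain} closely. Suppose toward contradiction that $j(P)/(\dot{G}\ast\dot{H})$ satisfies the $(j(\kappa),\mu,\mu)$-c.c. Pick a good $\alpha > \kappa$ in $j(P)$ and $\delta < j(\kappa)$ with $\lambda < \mathrm{cf}(\delta) \leq \alpha$ as in Lemma~\ref{rephrasedmain}. The chain condition descends from the full quotient to the intermediate quotient $Q \ast S^Q(\alpha, \delta)/(\dot{G}\ast\dot{H}_\delta)$: a $j(\kappa)$-family in the intermediate also lies in the full quotient (support below $\delta$ makes compatibility with $\dot{G}\ast\dot{H}$ reduce to compatibility with $\dot{G}\ast\dot{H}_\delta$), and the $\delta$-restriction $s \upharpoonright \delta$ of any common lower bound $s$ produced in the full quotient is again a lower bound in the intermediate quotient. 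But Lemma~\ref{dispoint}(1) combined with Lemma~\ref{dispoint}(2) produces a $j(\kappa)$-antichain in the intermediate quotient, and no $\mu$-subfamily of an antichain has a common lower bound (for $\mu \geq 2$), yielding the required contradiction. The main obstacle is verifying the descent of the chain condition through the restriction argument; I expect this to go through using the support structure and the fact that $s \upharpoonright \delta$ is weaker than $s$ but still a lower bound for any condition with support below $\delta$.
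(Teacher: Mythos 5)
The paper itself disposes of this theorem by citation: (1) and (2) are deferred to \cite{preprint}, (3) to \cite{MR942519}, and only (4) is derived in-house from Lemma~\ref{rephrasedmain}. Your clause (4) matches that exactly, and your clause (1) is at least aimed in the right direction (pass the $(j(\kappa),j(\kappa),<\mu)$-c.c.\ of $j(P)$, obtained by elementarity and $V_{j(\kappa)+1}\subseteq M$, through a quotient lemma in the style of Lemma~\ref{quotientsaturation}), although ``agree on a common root'' is too strong --- you can only thin to compatibility on the root, using that there are fewer than $j(\kappa)$ possible restrictions to a root of size $<j(\kappa)$ below the inaccessible $j(\kappa)$, and you must handle the fact that the coordinates are names.

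Clauses (2) and (3), however, contain genuine errors. For (3), your filtration $P_\alpha = j(P)_\alpha/(\dot G\ast\dot H)$ fails the size requirement: already $j(P)_0 = S(\mu,j(\kappa))$ has cardinality $j(\kappa)$ (each successor stage appends a Silver collapse reaching all the way up to $j(\kappa)$), so it is false that $|j(P)_\alpha|<j(\kappa)$ for $\alpha\geq 1$. The correct layering --- the one the ``goodness'' condition $P_\alpha\cap V_\alpha\lessdot P_\alpha$ is engineered for, and the one used in \cite{MR942519} --- is by the rank cuts $j(P)\cap V_\alpha$ at good $\alpha$, which do have size $<j(\kappa)$ and whose images land in $E^{\lambda^+}_\lambda$ of the extension because the collapsed inaccessibles acquire cofinality $\lambda$ there. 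For (2), your plan is to exhibit a $j(\kappa)$-sized antichain in (an intermediate quotient of) $j(P)/(\dot G\ast\dot H)$; but no such antichain can exist, since by Theorem~\ref{Kunenideal} and Lemma~\ref{fms} the full quotient has the $j(\kappa)$-c.c., and a complete suborder of it cannot have larger antichains. The antichain supplied by Lemma~\ref{dispoint}(1) has size $(\delta^+)^V$, which is $\geq\lambda^+$ \emph{as computed in the intermediate extension by $P\ast\dot S(\lambda,\delta)$} but is an ordinal below $j(\kappa)$; that is enough to kill $\lambda$-centeredness (via the $\lambda^+$-c.c.) but nowhere near a $j(\kappa)$-antichain. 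More fundamentally, refuting the $(j(\kappa),\mu,\mu)$-c.c.\ requires a $j(\kappa)$-sized family no $\mu$-sized subfamily of which is bounded below, which is strictly weaker than being an antichain; you have conflated the two, and an argument that produced what you describe would contradict the saturation of $\dot I$. A different witnessing family is needed here, and your proposal does not supply one.
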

\begin{proof}
 For (1) and (2), we refer to \cite{preprint}. (3) has been proven in \cite{MR942519}. (4) follows by Lemma \ref{rephrasedmain}. 
\end{proof}

\begin{rema}
 Kunen's theorem can be improved by Magidor's trick which appeared in \cite{MR526312}. This shows that an almost huge cardinal is enough to show Theorem \ref{kunentheorem}. Then we can use Levy collapse instead of Silver collapse. Remark \ref{levyremark} enables us to show the same result with Theorem \ref{main} for Levy collapses. On the other hand, to obtain layeredness, we need an almost huge embedding $j:V \to M$ with $j(\kappa)$ is Mahlo. If $j(\kappa)$ is not Mahlo, the ideal $\dot{I}$ is forced to be not layered. For details, we refer to \cite{preprint}.
\end{rema}

\section{Proof of Theorem \ref{main2}}
In this section, we give a proof of Theorem \ref{main2}. 

First, we recall the definition and basic properties of Laver collapse $L(\kappa,\lambda)$. $L(\kappa,\lambda)$ is the set of all $p$ such that 
\begin{itemize}
 \item $p \in \prod_{\gamma \in [\kappa^{+},\lambda) \cap \mathrm{Reg}}^{<\lambda}{^{<\kappa}\gamma}$. 
 \item There is a $\xi < \kappa$ with $\forall \gamma \in \mathrm{dom}(p)(\mathrm{dom}(p(\gamma)) \subseteq \xi)$.
 \item $\mathrm{dom}(p) \subseteq \lambda$ is Easton subset. That is, $\forall \alpha \in \mathrm{Reg} (\sup (\mathrm{dom}(p) \cap \alpha) < \alpha)$. 
\end{itemize}
$L(\kappa,\lambda)$ is ordered by reverse inclusion. It is easy to see that 
\begin{lem}
 \begin{enumerate}
  \item $L(\kappa,\lambda)$ is $\kappa$-closed.
  \item If $\lambda$ is Mahlo, then $L(\kappa,\lambda)$ has the $(\lambda,\lambda,<\mu)$-c.c. for all $\mu < \lambda$. Therefore, $L(\kappa,\lambda) \force \kappa^{+} = \lambda$. 
 \end{enumerate}
\end{lem}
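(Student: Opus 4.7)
My plan is to dispatch (1) by a direct union argument and to prove (2) by a Mahlo $\Delta$-system. For (1), given a descending sequence $\langle p_\alpha \mid \alpha < \theta\rangle$ in $L(\kappa,\lambda)$ with $\theta < \kappa$, I would take the coordinatewise union $p = \bigcup_\alpha p_\alpha$ and verify the three defining clauses: the common bound $\xi = \sup_\alpha \xi_\alpha$ stays below $\kappa$ by regularity of $\kappa$; the domain $\mathrm{dom}(p) = \bigcup_\alpha \mathrm{dom}(p_\alpha)$ is Easton because at each regular $\eta > \kappa^{+}$ the $\theta < \kappa \leq \eta$ bounded pieces $\mathrm{dom}(p_\alpha)\cap\eta$ have union of sup still below $\eta$; and each $p(\gamma)$ is a coherent chain of partial functions from a subset of $\xi$ into $\gamma$. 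This is routine bookkeeping.

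For (2), I would run successive thinnings on a given family $\langle p_\alpha \mid \alpha < \lambda\rangle$. By Mahloness of $\lambda$, restrict to the stationary set $I = \{\alpha < \lambda \mid \alpha \text{ is inaccessible and } \alpha > \mu\}$. Apply Fodor to the map $\alpha \mapsto \sup(\mathrm{dom}(p_\alpha) \cap \alpha)$, which is regressive on $I$ by the Easton property at the regular cardinal $\alpha$, to obtain stationary $S_1 \subseteq I$ and $\beta^{\ast} < \lambda$ with $\mathrm{dom}(p_\alpha) \cap \alpha \subseteq \beta^{\ast} + 1$ throughout $S_1$. Since $\lambda$ is inaccessible, $|L(\kappa, \beta^{\ast}+\omega)| < \lambda$, so pigeonhole produces $S_2 \subseteq S_1$ of size $\lambda$ on which $p_\alpha \restriction (\beta^{\ast}+1)$ is a fixed root $r$. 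A further pigeonhole over $\kappa$ yields $S_3 \subseteq S_2$ of size $\lambda$ with a common uniform bound $\xi^{\ast} < \kappa$. Finally, setting $\tau(\alpha) = \sup \mathrm{dom}(p_\alpha) + 1 < \lambda$ (bounded because $|\mathrm{dom}(p_\alpha)| < \lambda$ and $\lambda$ is regular), the set $C = \{\eta < \lambda \mid \forall \beta < \eta,\ \tau(\beta) < \eta\}$ is a club, and $S_4 = S_3 \cap C$ is stationary; for $\alpha < \alpha'$ in $S_4$, $\mathrm{dom}(p_\alpha) \subseteq \tau(\alpha) < \alpha'$, so the tails $p_\alpha \setminus r$ have pairwise disjoint supports.

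For any $\gamma < \mu$ conditions $\{p_{\alpha_i} \mid i < \gamma\}$ from $S_4$ indexed increasingly, I would take $q = r \cup \bigcup_i (p_{\alpha_i} \setminus r)$: tail separation makes $q$ a well-defined partial function, the uniform bound $\xi^{\ast}$ is preserved, the size is $<\lambda$ by regularity of $\lambda$, and the Easton property at a regular $\eta$ follows by splitting on $\eta \leq \beta^{\ast}+1$, $\beta^{\ast}+1 < \eta \leq \alpha_0$, and $\eta > \alpha_0 > \mu > \gamma$ (the last case uses $\gamma < \mathrm{cf}(\eta) = \eta$ to keep the union bounded in $\eta$). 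The main obstacle I anticipate is choreographing the thinnings so that the four features---fixed root $r$, fixed $\xi^{\ast}$, separated tails, and $\alpha_0 > \mu$---persist simultaneously on a $\lambda$-sized set, which is precisely what allows the Easton verification at small $\eta$ in the last step to go through; once this is in place, checking that $q$ is a condition is direct. The stated conclusion $\force \kappa^{+} = \lambda$ then follows: $\kappa$-closedness preserves $\kappa^{+}$, the $(\lambda,\lambda,<\mu)$-c.c.\ gives the $\lambda$-c.c.\ so $\lambda$ is preserved, and the standard density argument shows the generic collapses each regular $\gamma \in [\kappa^{+},\lambda)$ to $\kappa$.
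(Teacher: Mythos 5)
Your proof is correct and is exactly the ``standard closure argument'' for (1) and the ``usual $\Delta$-system argument'' for (2) that the paper invokes without detail, so it matches the paper's approach. One small point of hygiene: since the pigeonhole steps only give $S_2,S_3$ of size $\lambda$, you should either note that each fiber can in fact be taken stationary (the club filter on $\lambda$ is $\lambda$-complete), so that $S_3\cap C$ is stationary as you claim, or replace the club-intersection step by a recursive selection of $\lambda$ many indices with separated tails---either repair is immediate and the final conclusion only needs a subfamily of size $\lambda$.
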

\begin{proof}
 (1) follows by the standard argument. (2) follows by the usual $\Delta$-system argument.
\end{proof}

For $\kappa,\lambda,\mu,j,f$ in the assumption of Theorem \ref{main2}, let us define $P$. Let $\langle P_{\alpha} \mid \alpha \leq \kappa \rangle$ be the Easton support iteration of such that
\begin{itemize}
 \item $P_{0} = L(\mu,\kappa)$.
 \item $P_{\alpha + 1} = \begin{cases}P_{\alpha} \ast L^{P_\alpha \cap V_{\alpha}}(f(\alpha),\kappa) & \alpha\text{ is good}\\
			  P_{\alpha} & \text{otherwise}
			 \end{cases}$.
\end{itemize}
Goodness of $\alpha$ is defined as in Section 3. Define $P = P_\kappa$. Then 
\begin{lem}\label{basicpropofunivcoll2}
 \begin{enumerate}
  \item $P$ is $\mu$-directed closed and has the $(\kappa,\kappa,<\nu)$-c.c for all $\nu < \kappa$.
  \item $P \subseteq V_{\kappa}$ and $P \force \mu^{+} = \kappa$.
  \item $\kappa$ is good for $j(P)$. In particular, $j(P)_{\kappa} \cap V_{\kappa} = P \mathrel{\lessdot} j(P)_{\kappa}$.
  \item There is a complete embedding $\tau:P \ast \dot{L}(\lambda,j(\kappa)) \to j(P)_{\kappa + 1} \lessdot j(P)$ such that $\tau(p,\emptyset) = p$ for all $p \in P$. 
 \end{enumerate}
\end{lem}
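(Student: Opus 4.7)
The plan is to follow the template of Lemma \ref{basicpropofunivcoll}, modified for Easton support and the stronger chain condition of Laver collapses. Each of the four clauses will reduce to standard facts about Easton-support iterations together with the elementarity of $j$.

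First I would verify (1). The $\mu$-directed closedness I prove by induction on the iteration length: $L(\mu,\kappa)$ is $\mu$-closed and well-met, two-step iterations of $\mu$-directed closed forcings are $\mu$-directed closed, and Easton limits cooperate because a directed union of Easton sets of size $<\mu$ is again Easton and pointwise unions provide lower bounds. For the $(\kappa,\kappa,<\nu)$-c.c., given $\nu<\kappa$ and a family of $\kappa$-many conditions, I apply a $\Delta$-system argument to the Easton supports: the Mahloness of $\kappa$ (it is measurable in $V$) supplies a club of inaccessibles $\alpha$ where the roots fit below $\alpha$, and the $(\kappa,\kappa,<\nu)$-c.c.\ of each individual $L$ then closes the argument. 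Part (2) follows at once: $|P_\alpha|<\kappa$ for $\alpha<\kappa$ by induction using inaccessibility, so $P\subseteq V_\kappa$; and $P\force \mu^{+}=\kappa$ because $L(\mu,\kappa)\mathrel{\lessdot} P$ already collapses every $\gamma\in[\mu^{+},\kappa)$ while the $(\kappa,\kappa,<\mu)$-c.c.\ of $P$ preserves $\kappa$.

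For (3), elementarity of $j$ gives that $j(P)$ is, inside $M$, defined by the same recipe as $P$ but with parameters $j(f)$ and $j(\kappa)$; since $P\subseteq V_\kappa$ and $j\restriction V_\kappa=\mathrm{id}$, the first $\kappa$ stages of $j(P)$ recompute $P$, giving $j(P)_\kappa\cap V_\kappa=P$. Initial segments of Easton iterations are complete suborders, so $P\mathrel{\lessdot} j(P)_\kappa$. The other defining clauses of goodness at $\kappa$ are immediate: $\kappa\geq\mu$ is built in, $\kappa$ is inaccessible as the critical point of a huge embedding, and the $\kappa$-c.c.\ of the quotient is inherited from (1). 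Part (4) then follows: by (3) we have $j(P)_{\kappa+1}=j(P)_\kappa\ast\dot{L}^{j(P)_\kappa\cap V_\kappa}(j(f)(\kappa),j(\kappa))=j(P)_\kappa\ast\dot{L}^{P}(\lambda,j(\kappa))$ using $j(f)(\kappa)=\lambda$, and the map $\tau(p,\dot q)=(p,\dot q)$, reading the second coordinate as a $j(P)_\kappa$-name through $P\mathrel{\lessdot} j(P)_\kappa$, is a complete embedding by the standard composition of complete embeddings, and trivially satisfies $\tau(p,\emptyset)=p$.

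The main obstacle I anticipate lies in (1), specifically in proving the $(\kappa,\kappa,<\nu)$-c.c.\ uniformly for every $\nu<\kappa$. Easton support, in contrast with the bounded support of the Silver case, forces one to control an unbounded family of supports, and the $\Delta$-system argument must be carried through simultaneously for all $\nu<\kappa$; this is where one genuinely leans on the Mahloness of $\kappa$ and on the good-stage structure of the iteration. Everything else reduces either to the elementarity of $j$ or to the general calculus of complete suborders.
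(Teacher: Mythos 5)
The paper states this lemma without proof, as a standard property of the universal collapse, so your sketch is judged against the standard argument; your overall plan (induction for directed closure, a $\Delta$-system/Fodor argument for the chain condition, elementarity plus $j\restriction V_\kappa=\mathrm{id}$ for (3), and the identification $j(P)_{\kappa+1}=j(P)_\kappa\ast\dot{L}^{P}(\lambda,j(\kappa))$ for (4)) is indeed the right one. But one step is simply false and another, the crucial one, is not actually closed. The false step: $|P_\alpha|<\kappa$ cannot be proved by induction because it fails already at $\alpha=0$ --- $P_0=L(\mu,\kappa)$ has size $\kappa$, and at every good stage the iterand $L^{P_\alpha\cap V_\alpha}(f(\alpha),\kappa)$ is a collapse reaching all the way up to $\kappa$, so $|P_{\alpha+1}|=\kappa$. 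The inclusion $P\subseteq V_\kappa$ is still true, but for a different reason: each condition has support of size $<\kappa$ and each coordinate $p(\alpha)$ may be taken to be a nice $P_\alpha\cap V_\alpha$-name of hereditary size $<\kappa$ (using the $\alpha$-c.c.\ from goodness and the inaccessibility of $\kappa$), so every individual condition lies in $H_\kappa=V_\kappa$ even though the stages themselves have size $\kappa$.

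The genuine gap is in (1): you cannot ``close the argument'' by citing the $(\kappa,\kappa,<\nu)$-c.c.\ of each individual Laver collapse, since chain conditions of iterands are not preserved by (Easton-support) iterations by any general theorem, and because each iterand has size $\kappa$ with conditions whose domains may be cofinal in $\kappa$, a $\Delta$-system on the iteration supports alone does not reduce compatibility on the root coordinates to anything you can count or stabilize. The standard argument (Kunen, Laver, Foreman--Laver, Shioya) runs the thinning in two dimensions at once: on the supports of the iteration conditions \emph{and} on the domains of the nice names for the collapse conditions sitting at each coordinate; Mahloness of $\kappa$ gives \emph{stationarily} many inaccessible $\alpha$ (not a club of inaccessibles --- you need Fodor here) below which these sets are bounded by some fixed $\beta$, inaccessibility of $\kappa$ lets you count the possible restrictions below $\beta$ (there are $<\kappa$ of them, so $\kappa$ many conditions agree there), and then disjointness above $\beta$, the uniform bound $\xi<\mu$ on the second coordinates, and well-metness produce lower bounds for any $<\nu$ of the selected conditions. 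Your closing remark correctly identifies this as the main obstacle, but the sketch does not contain the idea that resolves it. Finally, in (4) you should also say a word about why $\dot{L}(\lambda,j(\kappa))$ computed in $V^{P}$ coincides with the iterand $\dot{L}^{j(P)_\kappa\cap V_\kappa}(j(f)(\kappa),j(\kappa))$ computed in $M^{P}$; this is where ${}^{j(\kappa)}M\subseteq M$ and $|P|=\kappa$ are used.
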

Laver proved the following in the case of $\kappa = \lambda$. But the same proof showed
\begin{thm}
 $P\ast \dot{L}(\lambda,j(\kappa))$ forces that ${\mathcal{P}}_{\kappa}\lambda$ carries a strongly saturated ideal $\dot{I}$. 
\end{thm}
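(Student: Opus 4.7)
The plan is to adapt the argument of Laver~\cite{MR673792} (and the proof of Theorem~\ref{Kunenideal}) to the present parameters, using the machinery provided by Lemma~\ref{basicpropofunivcoll2}. In the extension $V[G][H]$ by $P \ast \dot{L}(\lambda, j(\kappa))$, one uses the complete embedding $\tau$ from Lemma~\ref{basicpropofunivcoll2}(4) to further force with the quotient $j(P)/G \ast H$. A standard master-condition argument, exploiting that the tail of $j(P)$ past $\kappa+1$ is sufficiently directed closed in $M$ and that $j''\lambda$ can be captured by a single condition because $|j''\lambda| = \lambda < j(\kappa)$, lifts $j$ to $\hat{\jmath}:V[G] \to M[\hat{G}]$ in the further extension. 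The ideal $\dot{I}$ is then the dual of the induced generic normal fine ultrafilter on $\mathcal{P}_\kappa \lambda$.

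Next I would establish the Laver-collapse analogue of Lemma~\ref{fms}, namely that $P \ast \dot{L}(\lambda, j(\kappa))$ forces $\mathcal{P}(\mathcal{P}_\kappa \lambda)/\dot{I} \simeq j(P)/\dot{G} \ast \dot{H}$. The proof is the usual Foreman--Magidor--Shelah duality between generic ultrapowers and quotient forcings; one only needs to check that the argument of \cite[Claim 7]{MR942519} is insensitive to the choice of Silver vs.\ Laver collapse. The properties actually used are $\mu$-directed closure, the appropriate chain condition, and completeness of $\tau$, all of which are supplied by Lemma~\ref{basicpropofunivcoll2}.

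Strong saturation then reduces to showing that $j(P)/\dot{G} \ast \dot{H}$ has the $(j(\kappa), j(\kappa), <\lambda)$-c.c.\ in $V[G][H]$. Applying elementarity of $j$ to Lemma~\ref{basicpropofunivcoll2}(1) yields that in $V$, $j(P)$ has the $(j(\kappa), j(\kappa), <\nu)$-c.c.\ for every $\nu < j(\kappa)$, in particular for $\nu = \lambda$. The main obstacle is to transfer this chain condition to the quotient. Given names $\{\dot{q}_\xi \mid \xi < j(\kappa)\}$ for conditions of the quotient, one represents them (using that $P \ast \dot{L}(\lambda, j(\kappa))$ has a chain condition bounded in $j(\kappa)$) by $j(P)$-conditions $r_\xi$ each forced to lie in the quotient; then the $V$-chain condition selects a $<\lambda$-linked subfamily of size $j(\kappa)$. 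Finally, any common lower bound in $j(P)$ of $<\lambda$ such $r_\xi$'s, being compatible with $G \ast H$, gives a lower bound in $j(P)/G \ast H$, so $<\lambda$-linkedness descends to the quotient. This yields the required $(j(\kappa), j(\kappa), <\lambda)$-c.c., and hence strong saturation of $\dot{I}$.
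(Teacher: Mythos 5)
Your outline (lift $j$ via a master condition, establish the duality $\mathcal{P}(\mathcal{P}_{\kappa}\lambda)/\dot{I} \simeq j(P)/\dot{G}\ast\dot{H}$, then prove a chain condition for the quotient) is the right skeleton, and the first two paragraphs match Lemma \ref{dualitylaver} and the standard Kunen-style construction. The gap is in the third paragraph, which is exactly where the real work of Laver's argument lies. Your final step asserts that a common lower bound in $j(P)$ of $<\lambda$ many conditions $r_\xi$, ``being compatible with $G\ast H$, gives a lower bound in $j(P)/G\ast H$.'' This is unjustified: membership in the quotient means compatibility with \emph{every} element of $\tau``(G\ast H)$, and this property is not preserved by passing to a lower bound of conditions that individually enjoy it, nor does it follow from compatibility with the generic in any obvious sense. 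This is precisely the difficulty that Lemma \ref{quotientsaturation} is designed to handle: there one does not apply the chain condition to the $q_\alpha$'s alone, but to the twisted family $\tau(p_\alpha)\cdot q_\alpha$ (incorporating the conditions $p_\alpha$ deciding the names), and then takes a reduct to produce a condition in $P\ast\dot{L}(\lambda,j(\kappa))$ which \emph{forces} the product into the quotient. Your sketch drops the $\tau(p_\xi)$'s, and with them the argument.

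Even after repairing that, a second issue remains. The $(j(\kappa),j(\kappa),<\lambda)$-c.c.\ of the quotient demands a single subfamily of size $j(\kappa)$ \emph{all} of whose $<\lambda$-sized subsets --- including subsets that exist only in the extension --- have lower bounds in the quotient. A one-shot application of the ground-model chain condition followed by a density argument yields, for each fixed $<\lambda$-sized subset, some condition forcing a bound; it does not produce one subfamily working uniformly. (This is why the paper obtains the $(j(\kappa),\lambda,\lambda)$-c.c.\ of the quotient from Lemma \ref{quotientsaturation} in the proof of Theorem \ref{main2}, but does not derive strong saturation that way.) Laver's proof in \cite{MR673792} uses the specific structure of the Laver collapse at this point: conditions have Easton supports of size $<\lambda$ with heights bounded below the lower cardinal, so that $<\lambda$ many pairwise compatible conditions have a greatest lower bound, and this well-metness, combined with the master-condition analysis of the quotient and new $<\lambda$-sized sets being covered by ground-model ones, is what makes the uniform statement go through. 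None of this collapse-specific structure appears in your sketch, yet some such input is indispensable: by Theorem \ref{extentofkunen}(2), the analogous strong saturation \emph{fails} for the Silver-collapse construction, so no argument at the level of generality you use can suffice.
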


We have an analogue of Lemma \ref{fms} for Laver's ideal.
\begin{lem}\label{dualitylaver} 
 $P \ast \dot{L}(\lambda,j(\kappa))$ forces $\mathcal{P}({\mathcal{P}}_{\kappa}\lambda) / \dot{I} \simeq j(P) / \dot{G} \ast \dot{H}$. Here, $\dot{G} \ast \dot{H}$ is the canonical name for generic filter.
\end{lem}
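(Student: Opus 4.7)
The plan is to follow the Foreman--Magidor--Shelah duality of \cite[Claim 7]{MR942519}, which already underlies Lemma \ref{fms}. The key observation is that this argument is essentially algebraic: it depends only on the existence of the complete embedding $\tau : P \ast \dot{L}(\lambda, j(\kappa)) \to j(P)$ from Lemma \ref{basicpropofunivcoll2} (4) together with the hugeness of $j$ (which provides $j``\lambda \in M$ and the $j(\kappa)$-c.c.\ of $j(P)$). Neither property distinguishes the Silver from the Laver setting, so the same skeleton should transfer.

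First I would fix the setup. Let $G \ast H$ be $(V, P \ast \dot{L}(\lambda, j(\kappa)))$-generic, and write $Q = j(P)/\tau``(G \ast H)$. For any $(V[G \ast H], Q)$-generic filter $G^*$, the embedding $j$ lifts to an elementary $j^* : V[G \ast H] \to M[G^*]$ with $j``\lambda \in M[G^*]$. In $V[G \ast H]$, I would characterize $\dot{I}$ in the standard way: $A \in \dot{I}$ if and only if no condition of $Q$ forces $j``\lambda \in j^*(A)$. The candidate isomorphism is then
\[
 \Phi : \mathcal{P}(\mathcal{P}_\kappa \lambda)/\dot{I} \longrightarrow Q, \qquad [A]_{\dot{I}} \longmapsto q_A,
\]
where $q_A \in Q$ is any condition forcing $j``\lambda \in j^*(A)$.

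Next I would verify that $\Phi$ is an order isomorphism onto a dense subset of $Q$. Well-definedness and order-preservation are immediate from the characterization of $\dot{I}$ together with the fact that $A \cap B \in \dot{I}$ can be read off Boolean-algebraically from the values forcing $j``\lambda \in j^*(A) \cap j^*(B)$. The substantive step is density: given $q \in Q$, I must produce $A \subseteq \mathcal{P}_\kappa \lambda$ with $q_A \leq q$. This uses the usual ultrapower argument --- a name for $q$ in the generic ultrapower corresponds, via the hugeness of $j$, to a function $x : \mathcal{P}_\kappa \lambda \to V$, and one takes $A = \{ z \in \mathcal{P}_\kappa \lambda : x(z) \in \dot{G} \ast \dot{H} \}$, arguing by elementarity that $q_A$ is Boolean-equivalent to $q$.

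The potential obstacle is that $\dot{L}(\lambda, j(\kappa))$ is not $\lambda$-closed, unlike $\dot{S}(\lambda, j(\kappa))$, so one might expect the name analysis in the density step to become more delicate. However, the FMS argument only uses that $\tau$ is a complete embedding and that $j(P)$ satisfies the $j(\kappa)$-c.c., both of which are supplied by Lemma \ref{basicpropofunivcoll2} and the hugeness of $j$. Hence I expect the proof of \cite[Claim 7]{MR942519} to transfer verbatim, and no genuinely new ingredient is needed.
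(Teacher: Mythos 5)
Your proposal follows exactly the route the paper takes: the paper's proof of this lemma is simply the observation that the argument of \cite[Claim 7]{MR942519} transfers, since it only uses the complete embedding $\tau$ from Lemma \ref{basicpropofunivcoll2} (4), the hugeness of $j$, and the $j(\kappa)$-c.c.\ of $j(P)$, none of which depend on Silver versus Laver collapse. Your outline of the duality (the characterization of $\dot{I}$ via lifting $j$ and the density argument for the map $[A]_{\dot{I}} \mapsto q_A$) is the standard content of that claim, so the proposal is correct and essentially identical in approach.
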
\begin{proof}
 The same proof as in \cite[Claim 7]{MR942519} works.
\end{proof}

\begin{lem}\label{quotientsaturation}
 Suppose that $\tau:P \to Q$ is complete embedding between complete Boolean algebras and $Q$ has the $(\kappa,\mu,\mu)$-c.c. Then $P \force Q / \dot{G}$ has the $(\kappa,\mu,\mu)$-c.c.
\end{lem}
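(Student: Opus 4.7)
The plan is to use the standard name-pullback argument showing that chain conditions descend to quotient forcings. Toward a contradiction, suppose some $p \in P$ forces $\langle \dot{q}_i : i < \kappa\rangle$ to witness failure of $(\kappa,\mu,\mu)$-c.c.\ in $Q/\dot{G}$. For each $i < \kappa$, I fix in $V$ a maximal antichain $A_i \subseteq P_{\leq p}$ with values $q_{i,a} \in Q$ satisfying $a \force \dot{q}_i = q_{i,a}$, and define the canonical Boolean representative
\[
q^{*}_i := \bigvee_{a \in A_i}(a \wedge q_{i,a}) \in Q.
\]
Each $q^{*}_i$ is nonzero, since $a \force q_{i,a} \in Q/\dot{G}$ yields $a \wedge q_{i,a} \neq 0$, and one checks $\pi_P(q^{*}_i) = p$ for the projection $\pi_P : Q \to P$. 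Generically over $G \ni p$, $q^{*}_i$ and $\dot{q}_i^{G} = q_{i,a_i}$ represent the same class in $Q/G$, where $a_i$ is the unique element of $A_i \cap G$.

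The two key steps are: (i) apply the $(\kappa,\mu,\mu)$-c.c.\ of $Q$ in $V$ to $\langle q^{*}_i : i < \kappa\rangle$ to obtain $I \in V$ of size $\mu$ such that every $\mu$-subfamily of $\langle q^{*}_i : i \in I\rangle$ has a common lower bound in $Q$; (ii) show that $p$ forces this same $I$ to witness $(\kappa,\mu,\mu)$-c.c.\ for $\langle \dot{q}_i : i \in I\rangle$ in $Q/\dot{G}$, contradicting the initial assumption. For (ii), observe that in $V[G]$ a common lower bound of $\{\dot{q}_i^{G} : i \in J\}$ in $Q/G$ exists exactly when $\pi_P(\bigwedge_{i \in J} q^{*}_i) \in G$; the task is therefore to show that for every $\mu$-sized $J \subseteq I$ in $V[G]$, this projection lands in $G$.

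The main obstacle is that the $V$-hypothesis on $Q$ only produces nonzero meets for $V$-indexed subfamilies, and even then the projection to $P$ need not automatically lie in $G$. I plan to resolve this by a density argument: for each $P$-name $\dot{J}$ for a $\mu$-subfamily of $I$, I would show that the set of $P$-conditions $p' \leq p$ forcing $\pi_P(\bigwedge_{i \in \dot{J}} q^{*}_i) \in \dot{G}$ is dense below $p$. Passing to such $p'$ reduces the question to the $V$-level meet condition, which is handled by applying $(\kappa,\mu,\mu)$-c.c.\ of $Q$ to an enriched family built from the $A_i$-decompositions of the $q^{*}_i$. The Boolean-algebraic bookkeeping here is the technically delicate part, but it parallels the classical proof that $\theta$-c.c.\ descends to quotients.
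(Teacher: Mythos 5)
There is a genuine gap, and it sits exactly at the step you flag as ``technically delicate.'' The criterion driving your step (ii) --- that $\{\dot q_i^{G} : i\in J\}$ has a common lower bound in $Q/G$ exactly when $\pi_P(\bigwedge_{i\in J}q^{*}_i)\in G$ --- is false in the direction you need. The mixture $q^{*}_i=\bigvee_{a\in A_i}(\tau(a)\wedge q_{i,a})$ agrees with $\dot q_i$ only modulo the filter generated by $\tau``\dot{G}$, whereas in this paper $Q/\dot G$ is a subset of $Q$ ordered by $\leq_Q$, so a common lower bound must lie $\leq_Q$-below the actual values $q_{i,a_i}$; the nonzero meet $\bigwedge_{i\in J}q^{*}_i$ need not, because it is spread over all members of the antichains $A_i$, not just the generically chosen ones. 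Concretely, let $P$ be the Cohen algebra, $Q$ the completion of $P\times P$, $\tau(b)=(b,1)$, $p=1$, $A_i$ the $(i+1)$-st level of $2^{<\omega}$, and $q_{i,a}:=\tau(a)$. Each $\dot q_i$ is then a name for a member of $Q/\dot G$, and $q^{*}_i=\tau(1)$ for every $i$, so every meet $\bigwedge_{i\in J}q^{*}_i$ equals $\tau(1)$ and its projection lies in every $G$; but $\dot q_i^{G}=\tau(x\upharpoonright(i+1))$ (with $x$ the Cohen real), and no nonzero $r\in Q$, let alone one in $Q/G$, satisfies $r\leq\tau(x\upharpoonright(i+1))$ for infinitely many $i$: any $(s,t)\leq r$ from the dense set $2^{<\omega}\times 2^{<\omega}$ would need $s\supseteq x\upharpoonright(i+1)$ for all those $i$. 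So ``projection in $G$'' does not produce lower bounds. Your auxiliary density claim also fails: by the paper's convention $p'\force \check r_0\in\dot G$ iff $p'\leq r_0$, so for the name $\dot J=\check I$ the set of $p'\leq p$ forcing $\pi_P(\bigwedge_{i\in I}q^{*}_i)\in\dot G$ is exactly the cone below that projection, which is dense below $p$ only if the projection equals $p$; and for names of subsets not in $V$ the meet $\bigwedge_{i\in\dot J}q^{*}_i$ need not even denote an element of $Q$, since $Q$ is complete only in $V$.

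The repair is to drop both the mixing and the aim of making $p$ itself force the fixed $I$ to work; this is what the paper's proof does. For each $\alpha<\kappa$ choose a single $p_\alpha\leq p$ and $q_\alpha\in Q$ with $p_\alpha\force\dot q_\alpha=q_\alpha$, and apply the $(\kappa,\mu,\mu)$-c.c.\ of $Q$ to the family $\{\tau(p_\alpha)\wedge q_\alpha : \alpha<\kappa\}$ of nonzero elements: this keeps the deciding condition attached to its value instead of summing it away over the antichain. One obtains $Z\in[\kappa]^{\mu}$ with $\tau(\prod_{\alpha\in Z}p_\alpha)\wedge\prod_{\alpha\in Z}q_\alpha\neq 0$, and a reduction $r$ of this element to $P$ satisfies $r\leq\prod_{\alpha\in Z}p_\alpha\leq p$ and forces that $\prod_{\alpha\in Z}q_\alpha$ lies in $Q/\dot G$ and is a single common lower bound of the whole subfamily $\{\dot q_\alpha:\alpha\in Z\}$. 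That one lower bound automatically serves every $\mu$-sized subset of $Z$, including subsets that exist only in $V[G]$, so there is no quantification over names $\dot J$ at all; and since such an $r$ can be found below any condition, a straightforward density argument yields the lemma without any contradiction setup.
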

\begin{proof}
 We may assume that $P$ and $Q$ are complete Boolean algebras.
 Let $p \force \{\dot{q}_{\alpha} \mid \alpha < \kappa\} \subseteq Q / \dot{G}$ be arbitrary. For each $\alpha < \kappa$, there are $p_{\alpha} \leq p$ and $q_{\alpha} \in Q$ such that $p_{\alpha} \force \dot{q}_{\alpha} = q_{\alpha}$. By the $(\kappa,\mu,\mu)$-c.c. of $Q$, there is a $Z \in [\kappa]^{\mu}$ such that $\prod_{\alpha \in Z} \tau(p_\alpha) \cdot q_{\alpha} \not= 0$. It is easy to see that 
\begin{center}
 $\prod_{\alpha \in Z} \tau(p_\alpha) \cdot q_{\alpha} = \prod_{\alpha \in Z}\tau(p_{\alpha}) \cdot \prod_{\alpha \in Z}q_{\alpha} = \tau(\prod_{\alpha \in Z} p_{\alpha}) \cdot \prod_{\alpha \in Z}q_{\alpha}$.
\end{center}
Let $r$ be a reduct of $\tau(\prod_{\alpha \in Z} p_{\alpha}) \cdot \prod_{\alpha \in Z}q_{\alpha}$. Then $r \leq \prod_{\alpha \in Z}p_{\alpha}\leq p$ and this forces that $\prod_{\alpha \in Z}q_{\alpha} \in Q / \dot{G}$ is a lower bound of $\{\dot{q}_{\alpha} \mid \alpha \in Z\}$, as desired. 
\end{proof}

\begin{lem}\label{mainlemma2}
 $P \ast \dot{L}(\lambda,j(\kappa))$ forces that $j(P) / \dot{G} \ast \dot{H}$ does not have the $(j(\kappa),j(\kappa),\lambda)$-c.c.
\end{lem}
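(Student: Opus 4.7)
The plan is to follow the blueprint of Lemma~\ref{rephrasedmain}, replacing the $\lambda$-centering hypothesis with the $(j(\kappa),j(\kappa),\lambda)$-c.c.\ and the Silver-collapse antichain of Lemma~\ref{dispoint} with a Laver-collapse analog. First I would fix a good $\alpha\in(\kappa,j(\kappa))$ with $\alpha>\lambda$ and observe that it suffices to show $j(P)_{\alpha+1}/\dot{G}\ast\dot{H}$ fails the $(j(\kappa),j(\kappa),\lambda)$-c.c., since incompatibilities and absence of lower bounds in a complete suborder persist in the full quotient.

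The key auxiliary input is a Laver-collapse analog of Lemma~\ref{dispoint}: if $\delta$ is singular strong-limit with $\lambda<\mathrm{cf}(\delta)<\alpha$, then $L^V(\alpha,\delta)$ admits an antichain of size $\delta^+$. The construction copies the Silver case using a cofinal sequence $\langle\delta_i:i<\mathrm{cf}(\delta)\rangle$ of \emph{successor} regular cardinals (so the support $\{\delta_i\}$ is automatically Easton) and conditions $p_f(\delta_i)=\{(0,f(i))\}$ for $f\in\prod_i\delta_i$; distinct $f$'s give incompatible conditions, yielding $\delta^{\mathrm{cf}(\delta)}\geq\delta^+$ of them. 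A cardinal-arithmetic argument paralleling Lemma~\ref{dispoint}(2) then shows $P\ast\dot{L}(\lambda,\delta)\force(\delta^+)^V\geq\lambda^+$.

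The main argument proceeds by contradiction. Assume names $\dot{Z}$ and $\{\dot{r}_B:B\in[\dot{Z}]^\lambda\}$ witnessing the $(j(\kappa),j(\kappa),\lambda)$-c.c.\ in $j(P)_{\alpha+1}/\dot{G}\ast\dot{H}$ for some generic $j(\kappa)$-family. Define $\rho$ on conditions $q\in Q_\alpha\ast\dot{L}^{Q_\alpha}(\alpha,j(\kappa))$, where $Q_\alpha=j(P)_\alpha\cap V_\alpha$, as the least $\beta<j(\kappa)$ such that every decision involving $q$ (membership in $\dot{Z}$ and the evaluations of the $\dot{r}_B$ names relevant to $q$) already lives in $P\ast\dot{L}(\lambda,\beta)$; the $\kappa$-c.c.\ of $P$ keeps $\rho(q)<j(\kappa)$. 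Let $C\subseteq j(\kappa)$ be the club generated by $\beta\mapsto\sup\{\rho(q):q\in Q_\alpha\ast L^{Q_\alpha}(\alpha,\beta)\}$ and pick $\delta\in C$ strong limit with $\mathrm{cf}(\delta)\in(\lambda,\alpha)$. In the extension by $P\ast\dot{L}(\lambda,\delta)$, the Laver analog of Lemma~\ref{dispoint} combined with the complete embedding of Lemma~\ref{basicpropofunivcoll2}(4) yields a $\lambda^+$-antichain in $Q_\alpha\ast L^{Q_\alpha}(\alpha,\delta)/\dot{G}\ast\dot{H}_\delta$. On the other hand, an analog of Claim~\ref{claim1} — using the $\rho$-closure of $\delta$ — shows that the assumed c.c.\ witness restricts coherently to a $(j(\kappa),j(\kappa),\lambda)$-c.c.\ witness for the same smaller quotient. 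The contradiction follows because a $\lambda$-linked family of size $\geq\lambda$ is pairwise compatible and therefore cannot contain two members of any $\lambda^+$-antichain, precluding the $\lambda$-linked $j(\kappa)$-subfamily the restricted witness would require.

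The main obstacle will be formulating the $\rho$-function and the accompanying restriction claim for the full $(j(\kappa),j(\kappa),\lambda)$-c.c.\ witness. Unlike the centering family of Lemma~\ref{rephrasedmain}, which is just a $\lambda$-sequence of filter names, the witness here consists of $\dot{Z}$ together with $|[\dot{Z}]^\lambda|$-many lower-bound names $\dot{r}_B$; coding all of these so their associated ranks land below $j(\kappa)$ requires a careful argument combining the $\kappa$-c.c.\ of $P$ with the factorization of $j(P)_{\alpha+1}$ through each $P\ast\dot{L}(\lambda,\beta)$. A secondary but nontrivial point is ensuring the Easton support constraint of $L^V(\alpha,\delta)$ is compatible with the cofinal-sequence construction; restricting $\langle\delta_i\rangle$ to successor regular cardinals resolves this but has to be checked alongside the $\delta^+$-many-$f$'s computation in the Laver analog of Lemma~\ref{dispoint}.
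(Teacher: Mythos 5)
Your proposal transplants the architecture of Lemma~\ref{rephrasedmain} (reflection via a $\rho$-function plus a $\lambda^{+}$-antichain in a small quotient), but the final step does not work, and the failure is conceptual rather than technical. In Lemma~\ref{rephrasedmain} the contradiction comes from the fact that $\lambda$-centeredness is a \emph{local} property: it passes to the small quotient and implies the $\lambda^{+}$-c.c., which the antichain violates. The $(j(\kappa),j(\kappa),\lambda)$-c.c.\ has neither feature. It only constrains families of size $j(\kappa)$, so an antichain of size $\lambda^{+}<j(\kappa)$ is perfectly compatible with it: if you embed your antichain into a $j(\kappa)$-sized family, the guaranteed $\lambda$-linked subfamily of size $j(\kappa)$ may simply omit the antichain (it only needs to retain $j(\kappa)$ of the $j(\kappa)$ members). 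Worse, the small quotient $Q_\alpha\ast L^{Q_\alpha}(\alpha,\delta)/\dot{G}\ast\dot{H}_\delta$ has fewer than $j(\kappa)$ elements, so the ``restricted witness'' you invoke is vacuous there. And no antichain argument of any size can succeed: $j(P)/\dot{G}\ast\dot{H}$ has the $j(\kappa)$-c.c.\ (this is exactly the saturation of $\dot{I}$), so the failure of the $(j(\kappa),j(\kappa),\lambda)$-c.c.\ must be witnessed by $\lambda$-sized sets of \emph{pairwise compatible} conditions with no common lower bound, not by incompatibilities. (Your Laver analogue of Lemma~\ref{dispoint}, with the successor-cardinal trick for Easton supports, is fine as far as it goes; it just cannot deliver the contradiction you want.)

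The paper's proof is direct and much shorter: it exhibits a specific bad family, namely conditions $q_{\alpha}\in j(P)$ with $\mathrm{supp}(q_{\alpha})=\{\alpha\}$ for the (unboundedly many) good $\alpha>\kappa+1$; these all survive into the quotient because $\tau``(P\ast\dot{L}(\lambda,j(\kappa)))$ lives in $j(P)_{\kappa+1}$. Given any name $\dot{A}$ for a $j(\kappa)$-sized index set, Mahloness of $j(\kappa)$ yields an inaccessible $\alpha>\lambda$ forced to be a limit point of $\dot{A}$; then $\dot{x}=\dot{A}\cap\alpha$ has size $\lambda$ in the extension (since $\lambda^{+}=j(\kappa)$ there), and $\{q_{\gamma}\mid\gamma\in\dot{x}\}$ can have no lower bound $r$, because $\mathrm{supp}(r)\cap\alpha$ must be bounded below the inaccessible $\alpha$ by the Easton support of the iteration, while $\dot{x}$ is forced to be cofinal in $\alpha$. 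If you want to salvage your write-up, this is the mechanism you need: unbounded supports below an inaccessible, not large antichains.
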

\begin{proof}
 Let us show $P \ast \dot{L}(\lambda,j(\kappa)) \force j(P) / \dot{G} \ast \dot{H}$ does not has the $(j(\kappa),j(\kappa),\lambda)$-c.c. Let $\{q_{\alpha} \in j(P) \mid \alpha > \kappa + 1\}$ be an arbitrary such that $\mathrm{supp}(q_{\alpha}) = \{\alpha\}$ for all $\alpha$. Note that $\langle \emptyset,\emptyset \rangle \in P \ast \dot{L}(\lambda,j(\kappa))$. For every $p \in P\ast \dot{L}(\lambda,j(\kappa))$, $\tau(p) \in j(P)_{\kappa + 1}$ by Lemma \ref{basicpropofunivcoll2}(4). Therefore $\tau(p)$ meets with $q_{\alpha}$. We have $\force \{q_{\alpha} \mid \alpha > \kappa + 1\} \subseteq j(P) / \dot{G} \ast \dot{H}$. 
 
We fix an arbitrary $\dot{A}$ with $\force \dot{A} \in [j(\kappa)]^{j(\kappa)} $. Let us find an $\dot{x}$ such that $\force \dot{x} \in [\dot{A}]^{\lambda}$ and $\{q_{\alpha} \mid \alpha \in \dot{x} \}$ has a lower bound in $j(P) / \dot{G} \ast \dot{H}$. By the $j(\kappa)$-c.c. of $P \ast \dot{j}(\lambda,j(\kappa))$, there is a club $C \subseteq j(\kappa)$ such that $\force C \subseteq \mathrm{Lim}(\dot{A})$. Since $j(\kappa)$ is Mahlo, there is an inaccessible $\alpha \in C\setminus (\lambda + 1)$. Let $\dot{x}$ be a $P \ast \dot{L}(\lambda,j(\kappa))$-name for the set $\dot{A} \cap \alpha$. Since $\force \alpha \in \mathrm{Lim}(\dot{A})$, $\sup \dot{x} = \alpha$. Since $\lambda \leq \alpha < j(\kappa)$, $|\dot{x}| = |\alpha| = \lambda$ is forced by $P\ast \dot{L}(\lambda,j(\kappa))$. We claim that $\force \{q_\alpha \mid \alpha \in \dot{x}\}$ witnesses. Suppose otherwise, there are $p \in P \ast \dot{L}(\lambda,j(\kappa))$ and $r \in j(P)$ such that $p \force r \in j(P) / \dot{G}\ast\dot{H} \leq q_{\alpha}$ for all $\alpha \in \dot{x}$. Note that $\mathrm{supp}(r) \cap \alpha < \beta$ for some $\beta < \alpha$. By $q \force \sup \dot{x} = \alpha$, there are $q \leq p$ and $\gamma \in [\beta^{+},\alpha)$ such that $q \force \gamma \in \dot{x}$. Therefore $q \force r \leq q_{\gamma}$, and thus, $\{\gamma\} \subseteq \mathrm{supp}(r)\cap [\beta^{+},\alpha)$. This is a contradiction.
\end{proof}
Let us show 
\begin{proof}[Proof of Theorem \ref{main2}]
Note that $j(P)$ has the $(j(\kappa),j(\kappa),\mu)$-c.c. for all $\mu < j(\kappa)$. Therefore $j(P)$ has the $(j(\kappa),\lambda,\lambda)$-c.c. By Lemma \ref{quotientsaturation}, $j(P) / \dot{G} \ast \dot{H}$ is forced to have the $(j(\kappa),\lambda,\lambda)$-c.c. By Lemma \ref{mainlemma2}, $P \ast \dot{L}(\lambda,j(\kappa))$ forces that $\mathcal{P}({\mathcal{P}}_{\kappa}(\lambda)) / \dot{I}$ does not have the $(j(\kappa),j(\kappa),\lambda)$-c.c. 
 By Lemma \ref{dualitylaver}, $\dot{I}$ is forced to have the $(j(\kappa),\lambda,\lambda)$-c.c. but not the $(j(\kappa),j(\kappa),\lambda)$-c.c. 
\end{proof}

\begin{thm}
 $P \ast \dot{L}(\lambda,j(\kappa))$ forces that 
\begin{enumerate}
 \item $\dot{I}$ is $(j(\kappa),j(\kappa),<\lambda)$-saturated and $(j(\kappa),\lambda,\lambda)$-saturated.
 \item $\dot{I}$ is not $(j(\kappa),j(\kappa),\lambda)$-saturated.
 \item $\dot{I}$ is layered. 
\end{enumerate}
\end{thm}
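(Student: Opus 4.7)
Items (1) and (2) restate what has already been established in Theorem \ref{main2}; no additional argument is needed for them. Only (3), the layeredness of $\dot{I}$, requires fresh work, and the plan is to adapt the Foreman--Magidor--Shelah argument from \cite{MR942519} (invoked in Theorem \ref{extentofkunen}(3)) to the Laver-collapse setting.

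By Lemma \ref{dualitylaver}, it suffices to show that $j(P)/\dot{G}\ast\dot{H}$ is $S$-layered in the extension, for some stationary $S \subseteq E^{j(\kappa)}_\lambda$. The natural filtration is
\[
R_\alpha \;:=\; (j(P)_\alpha \cap V_\alpha)\big/\dot{G}\ast\dot{H}, \qquad \alpha < j(\kappa).
\]
I would verify in turn: (a) the size bound $|R_\alpha| < j(\kappa)$, which is immediate from $|V_\alpha| < j(\kappa)$ since $j(\kappa)$ is inaccessible; (b) $R_\alpha \lessdot j(P)/\dot{G}\ast\dot{H}$ at good $\alpha$, by a direct generalization of Lemma \ref{basicpropofunivcoll2}(3) together with the fact that complete embeddings descend to quotients by a generic filter, and at limits of good ordinals $R_\alpha$ is a complete suborder as a directed union of complete suborders; (c) continuity $R_\alpha = \bigcup_{\beta<\alpha} R_\beta$ at stationarily many $\alpha \in E^{j(\kappa)}_\lambda$. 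Writing $D$ for the (stationary, in fact club in a suitable sense) set of good ordinals below $j(\kappa)$, one takes $S := E^{j(\kappa)}_\lambda \cap \mathrm{Lim}(D)$ and the witnessing club $C := \mathrm{Lim}(D)$; stationarity of $S$ follows from the Mahloness of $j(\kappa)$.

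The main obstacle lies in (c). For $\alpha \in S$ of cofinality $\lambda$ that is a limit of good ordinals, one must show
\[
j(P)_\alpha \cap V_\alpha \;=\; \bigcup_{\beta<\alpha}\bigl(j(P)_\beta \cap V_\beta\bigr).
\]
The key combinatorial point is that the Easton support structure of $j(P)$, combined with the rank constraint $p \in V_\alpha$, forces the support of any $p \in j(P)_\alpha \cap V_\alpha$ to be bounded below $\alpha$; choosing $\beta < \alpha$ above both this bound and $\mathrm{rank}(p)$ places $p$ into $j(P)_\beta \cap V_\beta$. Once this bounded-support fact is verified carefully, the rest of the argument is standard, and layeredness of $\dot{I}$ follows from the equivalence supplied by Lemma \ref{dualitylaver}.
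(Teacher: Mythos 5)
Your handling of (1) and (2) agrees with the paper: they are exactly the content of Theorem \ref{main2} (together with the strong saturation already supplied by the Laver-style theorem), so only (3) requires an argument, and the paper itself simply cites the proof in \cite{MR942519}. In your reconstruction of that argument, however, the difficulty is located in the wrong place. Step (c) is essentially free: if $\alpha$ is any limit ordinal and $p \in j(P)_\alpha \cap V_\alpha$, then $\mathrm{rank}(p) < \alpha$ already forces $\mathrm{supp}(p)$ to be bounded below $\alpha$ and $p \in V_\beta$ for some $\beta < \alpha$, so $j(P)_\alpha \cap V_\alpha = \bigcup_{\beta<\alpha}\bigl(j(P)_\beta \cap V_\beta\bigr)$ at \emph{every} limit $\alpha$; no Easton-support or cofinality-$\lambda$ analysis is needed, and intersecting with the quotient gives continuity of $\langle R_\alpha \rangle$.

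The genuine gap is step (b). The fact you invoke (``complete embeddings descend to quotients by a generic filter'') is: if $A \lessdot B \lessdot C$, then $B/\dot{G}_A \lessdot C/\dot{G}_A$; it requires the filter one divides by to live on a subalgebra of $B$. Here you divide by the generic on $\mathrm{im}(\tau) \cong P \ast \dot{L}(\lambda,j(\kappa))$, and $\mathrm{im}(\tau)$ is \emph{not} contained in $j(P)_\alpha \cap V_\alpha$ for any $\alpha < j(\kappa)$: the $\dot{L}(\lambda,j(\kappa))$-coordinate of $\tau(p,\dot{q})$ is a $P$-name for a collapse condition whose domain can reach arbitrarily high below $j(\kappa)$, so such names have rank above $\alpha$. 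For two complete suborders $A,B \lessdot C$ with $A \not\subseteq B$, the trace $B \cap (C/G_A)$ need not be a complete suborder of $C/G_A$, so nothing you have said shows $R_\alpha \lessdot j(P)/\dot{G}\ast\dot{H}$; similarly, at limit stages an increasing union of complete suborders is not automatically complete, so that case is not free either. This is exactly where the cited argument does its work: one picks sufficiently closed $\delta$ so that, as in Claim \ref{claim1}, the relevant statements about conditions lying in $V_\delta$ are decided by the initial segment $P \ast \dot{L}(\lambda,\delta)$ of the forcing, and then checks that maximal antichains of $R_\delta$ computed in the full extension remain predense in $j(P)/\dot{G}\ast\dot{H}$. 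Without an argument of this kind your proposal does not establish (3); note also the Remark following Theorem \ref{extentofkunen}, which points out that layeredness can fail when $j(\kappa)$ is not Mahlo, so it cannot be a consequence of soft quotient facts alone.
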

\begin{proof}
 (1) and (2) follow from Lemma \ref{mainlemma2}. (3) follows by the proof in \cite{MR942519}. 
\end{proof}

Shioya improved Theorem \ref{lavertheorem} by using Easton collapse. Easton collapse $E(\kappa,\lambda)$ is the Easton support product $\prod^{E}_{\gamma \in [\kappa^{+},\lambda) \cap \mathrm{SR}}{^{<\kappa}\gamma}$. $\mathrm{SR}$ is the class of all cardinal $\gamma$ with $\gamma^{<\gamma} = \gamma$. He showed
\begin{thm}[Shioya~\cite{MR4159767}]\label{shioya}
Suppose that $j:V \to M$ is an almost-huge embedding with critical point $\kappa$ and $j(\kappa)$ is Mahlo. For regular cardinals $\mu < \kappa \leq \lambda < j(\kappa)$, $E(\mu,\kappa) \ast \dot{E}(\lambda,j(\kappa))$ forces that $\mu^{+} = \kappa$, $j(\kappa) = \lambda^{+}$ and $\mathcal{P}_{\kappa}\lambda$ carries a strongly saturated ideal.
\end{thm}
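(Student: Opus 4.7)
The plan is to follow the template of the generic-embedding argument used in Section~4 for Laver collapse, adapting each step to the Easton setting. The cardinal-arithmetic part, $\mu^+ = \kappa$ and $\lambda^+ = j(\kappa)$, is the easier half: $E(\mu,\kappa)$ is $\mu$-directed closed, and a $\Delta$-system argument against Easton support (using Mahloness of $\kappa$, which follows from almost-hugeness of $j$) yields the $(\kappa,\kappa,<\nu)$-c.c.\ for every $\nu<\kappa$. Parallel statements for $E(\lambda,j(\kappa))$ in the extension by $E(\mu,\kappa)$ use Mahloness of $j(\kappa)$. Closure plus chain condition immediately give the cardinal preservation claims, since every cardinal in the interval $(\mu,\kappa)$ is collapsed to $\mu$ and every cardinal in $(\lambda,j(\kappa))$ is collapsed to $\lambda$.

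The structural backbone is a complete embedding $\tau : E(\mu,\kappa)\ast\dot{E}(\lambda,j(\kappa)) \to j(E(\mu,\kappa))$ that is the identity on the first coordinate, analogous to Lemma~\ref{basicpropofunivcoll2}(4). By almost-hugeness, $M^{<j(\kappa)}\subseteq M$, so the Easton product of length $j(\kappa)$ computed in $M$ agrees with that of $V$ on conditions with support bounded below $j(\kappa)$. By elementarity, $j(E(\mu,\kappa))$ is, in $M$, the Easton support product over regulars $\gamma\in[\mu^+,j(\kappa))$ of ${}^{<\mu}\gamma$. Restricting supports to $[\mu^+,\kappa)$ recovers $E(\mu,\kappa)$, and the tail with supports contained in $[\lambda,j(\kappa))$ projects naturally onto a dense subset of $E(\lambda,j(\kappa))$; the mismatch between $<\mu$-length and $<\lambda$-length functions at each coordinate is handled by taking this projection as a term-forcing factor. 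Verifying density and completeness of this projection is the first technical step.

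With $\tau$ in place, define $\dot{I}$ over $\mathcal{P}_\kappa\lambda$ in the standard way: $A\in\dot{I}$ iff a suitable master condition in $j(E(\mu,\kappa))/\dot{G}\ast\dot{H}$ forces $j``\lambda\notin j(A)$. The Foreman--Magidor--Shelah duality invoked for Lemmas~\ref{fms} and \ref{dualitylaver} transfers verbatim and identifies $\mathcal{P}(\mathcal{P}_\kappa\lambda)/\dot{I} \simeq j(E(\mu,\kappa))/\dot{G}\ast\dot{H}$ in the extension by $E(\mu,\kappa)\ast\dot{E}(\lambda,j(\kappa))$.

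The main obstacle, and the whole reason Easton collapse is preferable here, is to show that $j(E(\mu,\kappa))$ has the $(j(\kappa),j(\kappa),<\lambda)$-c.c.\ in $V$. Given $\{p_\alpha:\alpha<j(\kappa)\}$, Easton support plus Mahloness of $j(\kappa)$ yields, via Fodor applied on the stationary set of inaccessibles below $j(\kappa)$, a stationary $S\subseteq j(\kappa)$ and a fixed Easton-bounded root $r$ with $\mathrm{supp}(p_\alpha)\cap\alpha = r$ for $\alpha\in S$. A second refinement on the coordinate-values over $r$, using that the set of possible restrictions $p\upharpoonright r$ has size less than $j(\kappa)$ (each coordinate contributing at most $|{}^{<\mu}\gamma|<j(\kappa)$), produces a $j(\kappa)$-subfamily on which the root-restrictions agree; by well-metness of the Easton product, any $<\lambda$-many conditions in this subfamily admit a common lower bound obtained by gluing the shared root with disjoint tails, since the union of fewer than $\lambda$ Easton-bounded sets is still Easton-bounded above $\lambda$. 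An immediate adaptation of Lemma~\ref{quotientsaturation} to the $<\lambda$-centered chain condition then transfers this property to the quotient, completing the verification that $\dot{I}$ is strongly saturated.
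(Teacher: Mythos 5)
You are proving a statement that the paper itself does not prove: Theorem \ref{shioya} is quoted from Shioya's paper \cite{MR4159767} without proof, so your proposal must stand on its own. Much of its skeleton is sound and matches the template of Sections 3 and 4: the cardinal arithmetic, the identification $\mathcal{P}(\mathcal{P}_{\kappa}\lambda)/\dot{I} \simeq j(E(\mu,\kappa))/\dot{G}\ast\dot{H}$ via duality, the $(j(\kappa),j(\kappa),<\lambda)$-c.c.\ of $E(\mu,j(\kappa))$ by a Fodor/$\Delta$-system argument at the Mahlo cardinal $j(\kappa)$ (with the correct observation that the tails must sit above $\lambda$ so that unions of fewer than $\lambda$ Easton sets stay Easton), and the final transfer to the quotient. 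The fatal problem is the step you wave through as ``the first technical step'': the claim that the tail of $E(\mu,j(\kappa))$ ``projects naturally onto a dense subset of $E(\lambda,j(\kappa))$.'' No such natural projection exists. A condition in the tail assigns to each coordinate $\gamma$ a partial function of size $<\mu$, while a condition of $E(\lambda,j(\kappa))$ assigns functions of size $<\lambda$; no restriction-type or coordinatewise-definable map can be order-preserving and have the projection property between these. The existence of such a projection is precisely Shioya's key lemma, and it is not formal: it rests on the folklore absorption (uniqueness) theorem for collapses --- for $\gamma \in \mathrm{SR}$, every separative $<\mu$-closed poset of size $\gamma$ that collapses $\gamma$ to $\mu$ is forcing equivalent to ${}^{<\mu}\gamma$ --- applied coordinatewise to $T_{\gamma} \times {}^{<\mu}\gamma$, where $T_{\gamma}$ is the term forcing over $E(\mu,\kappa)$ for the coordinate $({}^{<\lambda}\gamma)^{V[\dot{G}]}$, and then assembled across the Easton product via Laver's term-forcing lemma to recover the two-step iteration. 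Your phrase ``handled by taking this projection as a term-forcing factor'' names the right tool but contains none of this argument. This is exactly what separates Shioya's theorem from the Kunen and Laver results in this paper: there, the universal collapse iteration is built so that $P\ast\dot{S}(\lambda,j(\kappa))$ (resp.\ $P\ast\dot{L}(\lambda,j(\kappa))$) literally embeds into $j(P)_{\kappa+1}$ because $\kappa$ is good for $j(P)$ (Lemmas \ref{basicpropofunivcoll}(3),(4) and \ref{basicpropofunivcoll2}(3),(4)); with $P = E(\mu,\kappa)$ there is no such literal factorization to appeal to, and manufacturing the embedding is the whole content of the theorem.

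Two further gaps are worth flagging. First, your definition of $\dot{I}$ via ``a suitable master condition'' tacitly assumes hugeness: here $j$ is only almost huge, so $H$ has size $j(\kappa)$ while $M$ is merely $<j(\kappa)$-closed; $j``H$ need not lie in $M[\hat{G}]$ and no master condition for the second collapse is available. One must instead build the $M[\hat{G}]$-generic filter for $j(\dot{E}(\lambda,j(\kappa)))$ by Magidor's diagonalization of length $j(\kappa)$ --- the very point raised in the Remark at the end of Section 3. Second, the ``immediate adaptation'' of Lemma \ref{quotientsaturation} is not immediate: that lemma's proof takes the reduct of a single product over the entire chosen subfamily, which is only possible in the square case $(\kappa,\mu,\mu)$ where the subfamily has size $\mu$; for the non-square condition $(j(\kappa),j(\kappa),<\lambda)$ the subfamily has size $j(\kappa)$, different $<\lambda$-sized subsets require different witnessing conditions, and no single condition forces all of them into the quotient at once. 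The strong chain condition of $j(E(\mu,\kappa))/\dot{G}\ast\dot{H}$ has to be obtained by a direct argument in the extension (rerunning the $\Delta$-system analysis there, using that the $V$-inaccessibles below $j(\kappa)$ remain stationary under the $j(\kappa)$-c.c.\ forcing, and checking that the glued condition stays in the quotient), not by an abstract transfer lemma.
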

Let $\dot{I}$ be a $E(\mu,\kappa) \ast \dot{E}(\lambda,j(\kappa))$-name for the ideal in Theorem \ref{shioya}. The similar proof shows that 
\begin{thm}$E(\mu,\kappa) \ast \dot{E}(\lambda,j(\kappa))$ forces that
\begin{enumerate}
 \item $\dot{I}$ is $(j(\kappa),j(\kappa),<\lambda)$-saturated and $(j(\kappa),\lambda,\lambda)$-saturated.
 \item $\dot{I}$ is not $(j(\kappa),j(\kappa),\lambda)$-saturated.
 \item $\dot{I}$ is layered. 
\end{enumerate} 
\end{thm}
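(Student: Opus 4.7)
The plan is to imitate the proof of the previous theorem line-by-line, reading ``Laver collapse'' as ``Easton collapse'' throughout. First I would establish the Easton-collapse analogues of Lemmas \ref{basicpropofunivcoll2} and \ref{dualitylaver}. For the former, I need that $E(\kappa,\lambda)$ is $\kappa$-closed and has the $(\lambda,\lambda,<\mu)$-c.c.\ for every $\mu<\lambda$ whenever $\lambda$ is Mahlo; these are standard $\Delta$-system facts about Easton support products. The iteration/product mixing used to build Shioya's universal collapse then yields that $E(\mu,\kappa)$ is $\mu$-directed closed with the $(\kappa,\kappa,<\nu)$-c.c.\ for all $\nu<\kappa$, that $j(E(\mu,\kappa))_{\kappa}\cap V_\kappa = E(\mu,\kappa) \mathrel{\lessdot} j(E(\mu,\kappa))_{\kappa}$, and that there is a complete embedding $\tau:E(\mu,\kappa)\ast\dot{E}(\lambda,j(\kappa))\to j(E(\mu,\kappa))_{\kappa+1}\lessdot j(E(\mu,\kappa))$ fixing $E(\mu,\kappa)$. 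The duality $\mathcal{P}(\mathcal{P}_\kappa\lambda)/\dot{I}\simeq j(E(\mu,\kappa))/\dot{G}\ast\dot{H}$ then follows by the same Foreman--Magidor--Shelah argument cited from \cite{MR942519}.

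With these in hand, write $Q = j(E(\mu,\kappa))$. Part (1) is then routine: $Q$ has the $(j(\kappa),j(\kappa),<\nu)$-c.c.\ for every $\nu<j(\kappa)$ by the product $\Delta$-system lemma and the Mahloness of $j(\kappa)$; in particular it has the $(j(\kappa),\lambda,\lambda)$-c.c. Lemma \ref{quotientsaturation} transfers both properties to the quotient $Q/\dot{G}\ast\dot{H}$, and the duality transfers them to $\dot{I}$. For part (2), I would mimic Lemma \ref{mainlemma2}: choose a family $\{q_\alpha\mid \alpha>\kappa+1\}\subseteq Q$ with $\mathrm{supp}(q_\alpha)=\{\alpha\}$, which by $\tau(p)\in Q_{\kappa+1}$ is forced into the quotient. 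Given $\dot{A}$ of size $j(\kappa)$, use the chain condition to thin it to a club $C\subseteq \mathrm{Lim}(\dot{A})$, pick by Mahloness an inaccessible $\alpha\in C\setminus(\lambda+1)$, and set $\dot{x}=\dot{A}\cap\alpha$; a putative lower bound $r$ of $\{q_\beta\mid \beta\in\dot{x}\}$ would satisfy $\sup(\mathrm{supp}(r)\cap\alpha)<\alpha$ by the Easton support of $r$, contradicting the fact that arbitrarily large $\gamma<\alpha$ lie in $\mathrm{supp}(r)\cap[\beta^+,\alpha)$. Part (3) transfers verbatim from \cite{MR942519}.

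The main obstacle, and really the only non-bookkeeping step, is verifying the complete embedding $\tau:E(\mu,\kappa)\ast \dot{E}(\lambda,j(\kappa))\to j(E(\mu,\kappa))_{\kappa+1}$: one must check that Easton support at stage $\kappa$ of $j(E(\mu,\kappa))$ genuinely absorbs $\dot{E}(\lambda,j(\kappa))$ in the correct way, since Easton products behave differently from both Silver collapses (trivial absorption) and Laver collapses (absorption via $<\lambda$-support). Once this is confirmed, the Easton support condition on $r$ in the argument for (2) goes through, because any $r\in Q$ has $\mathrm{supp}(r)\cap\alpha$ bounded below $\alpha$ at every inaccessible $\alpha$, and this is precisely what forces the contradiction in the style of Lemma \ref{mainlemma2}.
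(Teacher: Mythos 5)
Your overall plan --- duality plus Lemma \ref{quotientsaturation} for (1), an Easton analogue of Lemma \ref{mainlemma2} for (2), and the layering argument of \cite{MR942519} for (3) --- is the route the paper intends by ``the similar proof'', and the second half of your argument for (2) (a putative lower bound $r$ has Easton support, so $\sup(\mathrm{supp}(r)\cap\alpha)<\alpha$ at the inaccessible $\alpha$, contradiction) does transfer. The gap is in how you get the family $\{q_\alpha\}$ into the quotient. You treat Shioya's forcing as a universal-collapse-style iteration with a ``stage $\kappa$'' and argue compatibility from $\tau(p)\in Q_{\kappa+1}$, i.e.\ from disjointness of supports, exactly as in Lemma \ref{mainlemma2}. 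But in Theorem \ref{shioya} there is no iteration at all: $E(\mu,\kappa)$ is an Easton support product, $j$ is only almost huge, the outer poset is $E(\mu,j(\kappa))$ (again a product), and the complete embedding of $E(\mu,\kappa)\ast\dot{E}(\lambda,j(\kappa))$ comes from Shioya's coordinatewise absorption. Its range cannot lie in any bounded block of coordinates: if it were contained in the coordinates below some $\delta<j(\kappa)$, the generic for $\dot{E}(\lambda,j(\kappa))$ would be computable from a forcing of size $<j(\kappa)$, which is impossible since that generic collapses cardinals unboundedly in $j(\kappa)$. So ``$\tau(p)\in Q_{\kappa+1}$'' has no analogue here, and a single-coordinate condition $q_\alpha$ at a coordinate $\gamma_\alpha>\kappa$ is not automatically compatible with every condition in the range of $\tau$: the absorption uses those very coordinates.

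To repair this you must use the actual shape of the absorption, for instance presenting $E(\mu,j(\kappa))$ densely as $E(\mu,\kappa)\times\bigl(E(\mu,[\kappa,j(\kappa)))\times T\bigr)$ with $T$ the term forcing for $\dot{E}(\lambda,j(\kappa))$, and choosing the $q_\alpha$ with trivial $T$-part, i.e.\ inside the free Easton-product factor; such conditions project trivially to $E(\mu,\kappa)\ast\dot{E}(\lambda,j(\kappa))$, hence are forced into the quotient, and the Easton-support contradiction can then be run inside that factor. This is the one place where the Easton case is genuinely not a line-by-line translation of Lemma \ref{mainlemma2}; your proposal correctly flags the embedding as the nontrivial ingredient, but then concludes the quotient-membership step as if the Laver/Silver disjoint-support argument applied, which as written is unjustified. (A smaller point: since Theorem \ref{shioya} starts from an almost-huge $j$, the analogue of Lemma \ref{dualitylaver} should be quoted for Shioya's ideal from \cite{MR4159767} rather than rederived from a huge embedding.)
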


We conclude this paper with the following question.
\begin{ques}
 Does $P \ast \dot{L}(\lambda,j(\kappa))$ force that $\dot{I}$ is centered? What about $E(\mu,\kappa)\ast\dot{E}(\lambda,j(\kappa))$?
\end{ques}

\bibliographystyle{plain}
\bibliography{ref}

\begin{thebibliography}{1}

\bibitem{MR942519}
M.~Foreman, M.~Magidor, and S.~Shelah.
\newblock Martin's maximum, saturated ideals and nonregular ultrafilters. {II}.
\newblock {\em Ann. of Math. (2)}, 127(3):521--545, 1988.

\bibitem{MR2768692}
Matthew Foreman.
\newblock Ideals and generic elementary embeddings.
\newblock In {\em Handbook of set theory. {V}ols. 1, 2, 3}, pages 885--1147.
  Springer, Dordrecht, 2010.

\bibitem{MR925267}
Matthew Foreman and Richard Laver.
\newblock Some downwards transfer properties for {$\aleph_2$}.
\newblock {\em Adv. in Math.}, 67(2):230--238, 1988.

\bibitem{MR1994835}
Akihiro Kanamori.
\newblock {\em The higher infinite}.
\newblock Springer Monographs in Mathematics. Springer-Verlag, Berlin, second
  edition, 2003.
\newblock Large cardinals in set theory from their beginnings.

\bibitem{MR495118}
Kenneth Kunen.
\newblock Saturated ideals.
\newblock {\em J. Symbolic Logic}, 43(1):65--76, 1978.

\bibitem{MR673792}
Richard Laver.
\newblock An {$(\aleph _{2},\,\aleph _{2},\,\aleph _{0})$}-saturated ideal on
  {$\omega _{1}$}.
\newblock In {\em Logic {C}olloquium '80 ({P}rague, 1980)}, volume 108 of {\em
  Stud. Logic Foundations Math.}, pages 173--180. North-Holland, Amsterdam-New
  York, 1982.

\bibitem{MR526312}
Menachem Magidor.
\newblock On the existence of nonregular ultrafilters and the cardinality of
  ultrapowers.
\newblock {\em Trans. Amer. Math. Soc.}, 249(1):97--111, 1979.

\bibitem{MR4159767}
Masahiro Shioya.
\newblock Easton collapses and a strongly saturated filter.
\newblock {\em Arch. Math. Logic}, 59(7-8):1027--1036, 2020.

\bibitem{preprint}
Kenta Tsukuura.
\newblock The extent of saturation of induced ideals.
\newblock {\em arXiv preprint arXiv:2201.02364}, 2022.

\end{thebibliography}

\end{document}